\documentclass[10pt,a4paper]{article}
\usepackage{fullpage,mathrsfs,graphicx,framed,color,amssymb,amsmath,amsthm}
\usepackage[boxed, noline, ruled, linesnumbered]{algorithm2e}
\usepackage{epsfig, graphicx}
\usepackage{latexsym,amsfonts,amsbsy,amssymb}
\usepackage{amsmath,amsthm}
\usepackage{enumerate}
\usepackage{bm}
\usepackage{slashbox} 
\makeatletter 

\@addtoreset{equation}{section}
\makeatother 
\allowdisplaybreaks 
\newtheorem{theorem}{Theorem}[section]
\newtheorem{lemma}{Lemma}[section]

\newtheorem{remark}{Remark}[section]
\newtheorem{asume}{Assumption}[section]

\begin{document}
\title{Optimal error estimates of sequential finite element method for nonlinear thermo-poroelasticity problems}
\author{Fang Wang\footnote{Department of Mathematics and Statistics, Zaozhuang University, Zaozhuang 277160, China.
({\tt wgfg99@163.com})},\ \ \
Fan Chen\footnote{Department of Mathematics and Statistics, Zaozhuang University, Zaozhuang 277160, China.
({\tt chenfan00@126.com})},\ \ \
Changqing Lv\footnote{Department of Mathematics and Statistics, Zaozhuang University, Zaozhuang 277160, China. ({\tt cqiqc1999@126.com})} \  \  and\  \ Chenguang Zhou\footnote{Department of Mathematics, Beijing University of Technology, Beijing 100124, China. ({\tt zhoucg@bjut.edu.cn})} \footnote{Corresponding author: Chenguang Zhou} }
\date{}
\maketitle
\begin{abstract}
This study introduces and analyzes a three-step sequential decoupling algorithm designed to address nonlinear, fully coupled quasi-static thermo-poroelasticity systems incorporating convective transport. The finite element method is employed for spatial discretization and the backward Euler method for temporal discretization. The proposed sequential method has a higher computing efficiency than the fully implicit nonlinear numerical scheme, since it does not require any internal iterations. The well-posedness of the numerical solution is discussed by introducing a cut-off operator and the stability analysis of the algorithm is performed. Rigorous analysis yields optimal convergence order estimates for both spatial and temporal discretizations. In order to confirm the theoretical results and the effectiveness of the suggested approach, numerical experiments are finally carried out.
\vskip0.3cm {\bf Keywords.} Thermo-poroelasticity problem; sequential method; finite element method; optimal error estimate.

\vskip0.2cm {\bf AMS subject classifications.} 65M12, 65M60, 76S99.
\end{abstract}

\section{Introduction}\label{intro}
Poroelasticity theory, originating from foundational contributions by Terzaghi \cite{Terzaghi1943Theoretical} and Biot \cite{12}, provides a rigorous mathematical framework for modeling the coupled interactions between fluid flow and solid deformation within porous media. This theory has been subsequently extended to the non-isothermal case \cite{2010Macroscopic,MR3920899}, giving rise to the thermo-poroelasticity model that incorporates thermal effects into the poroelasticity system. Owing to its robust physical basis and broad applicability, the thermo-poroelasticity framework is extensively utilized across diverse scientific and engineering domains. For instance, it plays a critical role in simulating soil consolidation dynamics in geotechnical engineering \cite{poromechanics}, analyzing energy transfer mechanisms in geothermal systems, modeling thermal ablation processes in tumor treatment within biomechanics \cite{nzl}, and addressing environmental challenges such as carbon dioxide sequestration and thermal remediation strategies \cite{2021A,2017Multi}.

 Let $\Omega\subset\mathbb R^d$ $(d=2,3)$ be a convex polygonal or polyhedral domain with a Lipschitz boundary $\partial\Omega$, and let $(0,t_f]$ be a time interval with $t_f>0$. This research examines the following nonlinear thermo-poroelasticity model: Find $({\mathbf u}, p, T)$ satisfying
\begin{align}
 -\nabla\cdot\bm\sigma({\mathbf u})+\alpha\nabla p+\beta\nabla T
&=\bm{f}, \ \ \textrm {in} \  \Omega\times(0,t_f], \label{e1}\\
\frac {\partial }{\partial t}(c_0p-b_0T+\alpha\nabla\cdot{\mathbf u})-\nabla\cdot({\mathbf K}\nabla p)&=g, \ \  \textrm {in}  \ \Omega\times(0,t_f],\label{e2}\\
\frac {\partial }{\partial t}(a_0T-b_0p+\beta\nabla\cdot{\mathbf u})-{\mathbf K}\nabla p\cdot\nabla T-\nabla\cdot(\bm\Theta\nabla T)&=z, \ \  \textrm {in}  \ \Omega\times(0,t_f].\label{e3}
\end{align}
Here, $\bm f$ denotes the body force, while $g$ and $z$ represent the mass source and the heat source respectively. The displacement field is expressed as ${\mathbf u}(t):\Omega\to\mathbb R^d$, with pressure $p(t):\Omega\to\mathbb R$ and temperature $T(t):\Omega\to\mathbb R$ representing the corresponding scalar fields. The effective stress tensor is given by the expression $\bm\sigma({\mathbf u})=2\mu\bm\epsilon({\mathbf u})+\lambda\nabla\cdot{\mathbf u\mathbf I}$ with the strain tensor $\bm\epsilon({\mathbf u})=\frac{\nabla{\mathbf u}+\nabla{\mathbf u}^{\mathsf T}}{2}$, the Lam\'{e} constants $\mu$ and $\lambda$, and the identity tensor ${\mathbf I}$. The coefficients $\alpha$ and $\beta$ denote the Biot-Willis constant and the thermal stress, respectively. The parameter $a_0$ denotes the effective volumetric heat capacity, $b_0$ the thermal dilation coefficient, and $c_0 \geq 0$ the constrained specific storage coefficient. The permeability tensor divided by fluid viscosity is denoted by ${\mathbf K}=(K_{ij})_{i,j=1}^d$ and the thermal conductivity tensor is represented by $\bm\Theta=(\Theta_{ij})_{i,j=1}^d$.

The system is subject to initial conditions at $t=0$,
\begin{align}
{\mathbf u}(\cdot,0)&={\mathbf u}^0, \ \ \textrm {in} \ \Omega,\label{e5}\\
p(\cdot,0)&=p^0, \ \ \textrm {in} \ \Omega,\label{e6}\\
T(\cdot,0)&=T^0, \ \ \textrm {in} \ \Omega,\label{e7}
\end{align}
and boundary conditions on $\partial\Omega\times(0,t_f]$,
\begin{align}
{\mathbf u}=\mathbf 0,\ \ p=0,\ \ T&=0.\label{e4}
\end{align}

A variety of numerical methods have been developed for the thermo-poroelasticity model, including finite element methods \cite{MR5009266,MR5024217,MR5030055,MR4432106}, discontinuous Galerkin methods \cite{MR4579739,chen}, enriched Galerkin methods \cite{MR4755187}, etc.. From the perspective of computational efficiency, the innate complexity of coupled thermo-poroelastic multi-physics systems requires the development of numerical methods that are both accurate and computationally efficient. Large-scale algebraic systems must be built and solved for fully implicit techniques \cite{chen}, which couple and solve all variables at each time step. Even though these techniques have good stability properties, their high computational expense is a major barrier to large-scale engineering applications.

Numerical techniques that decouple the fluid and elasticity subproblems have gained popularity as a natural alternative for solving large-scale coupled systems. For the thermo-poroelasticity model, two primary decoupling approaches exist: iterative method and sequential method. Iterative methods \cite{MR4685928,MR3614285,2019Monolithic,MR3253752} solve subproblems at each time step, relying on approximate solutions from previous iterations until a predefined convergence tolerance is achieved. While this approach effectively reduces the computational complexity per iteration step, it introduces additional costs due to the iterative process, particularly in strongly nonlinear regimes, where convergence guarantees become challenging to establish. Sequential methods \cite{MR3800024,MR4101493}, solve the subproblems at each time step without internal iterations. Compared with fully coupled algorithms, this strategy significantly reduces the dimension of the discrete problem and enhances the efficiency of computation. In the recent work, Chaabane and Rivi\`ere \cite{MR3777103} introduce a new sequential finite element method for the isothermal linear Biot system, where the system is decoupled into two subproblems, and pressure and displacement are solved in time step sequence. Through incorporating stabilization terms, a priori error estimates of numerical scheme for displacement and pressure are derived. Nevertheless, their analysis is limited to convergence results that are optimal for piecewise linear elements but suboptimal for high order elements by imposing $\Delta t \ge h^2$ (see Remark 4.3 of \cite{MR3777103} for more details). Moreover, the stability of their scheme remains unaddressed.

This paper presents a sequential decoupling method for solving fully coupled quasi-static thermo-poroelasticity problems that include nonlinear convective transport. At each time step, this approach consists of three steps. First, the fluid equation is solved for pressure, under the assumption that the temperature and displacement from the previous two time layers are available. Second, using the computed pressure and the heat equation, the temperature is updated based on the known displacement values from the two prior time layers. Third, the displacement  is obtained by solving the mechanical equation, utilizing the previously computed pressure and temperature. The thermo-poroelasticity model has thus been completely separated at every time step.  Clearly, the values of pressure, temperature, and displacement from the first two time layers are required for the implementation of the above procedure. Here, we select a linear numerical scheme similar to that of \cite{MR4432106} to obtain the optimal convergence. The finite element method and the backward Euler method are used for discretizing space and time. We examine the algorithm's stability as well as the existence and uniqueness of numerical solutions. Furthermore, we derive error estimates that attain optimal convergence rates in both spatial and temporal discretizations. Unlike the fully implicit method \cite{MR4432106} or the iterative method \cite{cai}, our method only solves once at each time step and there is no internal iterations, which illustrates that the proposed method has higher computational efficiency. Different from the existing partitioned approaches, our method has the following features: (i) we use a linear extrapolation of displacement and temperature from the previous two time layers to decouple the pressure equation; (ii) a single stabilization parameter $\gamma$ controls the error propagation between the subproblems; (iii) the theoretical analysis covers both the conditional stability and optimal error estimates in space and time, which are not fully addressed in the earlier work \cite{MR3777103}. To the best of our knowledge, this work is the first to establish both stability and optimal error estimates for the sequential finite element method applied to the fully coupled nonlinear thermo-poroelasticity system.

This paper is structured as follows. In Section \ref{CA}, we carry out some preliminaries and present the fully discrete numerical scheme for the thermo-poroelasticity problem. The details of the sequential decoupling algorithm are provided in Section \ref{SD}, where we also perform an algorithmic well-posedness study and derive the optimal convergence order estimates. Finally, in Section \ref{NE}, we report our numerical simulations to validate our theoretical findings and expectations.

\section{Coupled algorithm}\label{CA}
In this section, we firstly present the variational formulation of the model problem \eqref{e1}-\eqref{e7}. Then, a fully discrete finite element scheme is constructed and its unconditional energy stability is analyzed.

\subsection{Preliminaries}
In this article, we define the following spaces. The Sobolev space $W^{k,l}(\Omega)$ (cf. \cite{MR450957}) in $L^l(\Omega)$ is employed. Specifically, we denote $H^{k}(\Omega):=W^{k,2}(\Omega)$ for $k\geq 0$. Let $(\cdot, \cdot)_k$ and $\|\cdot\|_k$ denote the inner-product and norm on $H^k(\Omega)$, respectively. For $k=0$, $H^0(\Omega)$  is equivalent to $L^2(\Omega)$, which represents the space of square-integrable functions, the inner product and norm are denoted without the subscript $k$. Define $H_0^1(\Omega)$ as the subspace of $ H^1(\Omega)$ containing functions with vanishing trace on $\partial\Omega$. We denote by $||\cdot||_{0,\infty}$ the norm on $L^{\infty}(\Omega)$.
For functions dependent on both time and space, we define
$ L^2(0,t_f;H^k(\Omega))=\{v:\int_0^{t_f}||v||_k^2dt<\infty\}$
with
$||v||_{L^2(0,t_f;H^k(\Omega))}^2=\int_0^{t_f}||v||_{k}^2dt.$

The following assumptions, taken from \cite{2018Well}, are introduced before the numerical scheme for problem \eqref{e1}-\eqref{e7} is presented.
\begin{asume}\label{js}
({1}) Both the thermal conductivity $\bm\Theta$ and the permeability ${\mathbf K}$ are constant in time, symmetric and positive definite. Furthermore, there exist $k_m>0$, $k_M>0$ and $\theta_m>0$, $\theta_M>0$, such that
\begin{align*}
k_m|\zeta|^2\leq \zeta^T{\mathbf K}(\mathbf x)\zeta, \ \ |{\mathbf K}(\mathbf x)| \leq k_M|\zeta|,\quad \forall \zeta\in \mathbb R^d, \quad\mathbf x \in \Omega,\\
\theta_m|\zeta|^2\leq \zeta^T\bm\Theta(\mathbf x)\zeta,\ \ |\bm\Theta(\mathbf x)|\leq \theta_M|\zeta|,\quad \forall \zeta\in \mathbb R^d,\quad \mathbf x \in \Omega.
\end{align*}
({2}) Strictly positive constants are defined as $a_0$, $b_0$, $c_0$, $\alpha$, $\beta$, $\mu$, and $\lambda$.\\
({3}) The parameters satisfy $c_0-2b_0>0$ and $a_0-2b_0\ge0$.\\
({4}) The right-hand side terms  fulfil  $z,g\in L^2(0,t_f;L^2(\Omega))$ and ${\bm f}\in H^1(0,t_f;L^2(\Omega))$.\\
({5}) The initial values  fulfil $p^0,T^0\in H_0^1(\Omega)$ and ${\mathbf u}^0\in(H_0^1(\Omega))^d$.\\
({6}) The solution functions $\nabla p$, $p$, $T$, $\mathbf u$ satisfy, for $l>d$ and $k_i\geq1$ $(i=1,2,3)$,
\begin{align*}
&||\nabla p||_{L^{\infty}(0,t_f;W^{1,l}(\Omega))}+||p||_{L^{\infty}(0,t_f;W^{2,l}(\Omega))}\\
+&||p||_{L^{\infty}(0,t_f;H^{k_2+1}(\Omega))}+||T||_{L^{\infty}(0,t_f;H^{k_3+1}(\Omega))}+
||\mathbf u||_{L^{\infty}(0,t_f;(H^{k_1+1}(\Omega))^d)}\\
+&||p_t||_{L^{\infty}(0,t_f;W^{1,l}(\Omega))}+||T_t||_{L^{\infty}(0,t_f;W^{1,l}(\Omega))}+
||\mathbf u_t||_{L^{\infty}(0,t_f;(W^{2,l}(\Omega))^d)}\\
+&||p_{tt}||_{L^{2}(0,t_f;L^{l}(\Omega))}+||T_{tt}||_{L^{2}(0,t_f;L^{l}(\Omega))}+
||\mathbf u_{tt}||_{L^{2}(0,t_f;(W^{1,l}(\Omega))^d)}\le C.
\end{align*}
\end{asume}
Additionally, the sign $C$, with or without subscripts, represents a generic positive constant that may vary depending on where it appears in the document.

For simplicity, we present the spaces
$${\mathbf V}=(H_0^1(\Omega))^d, \quad Q=H_0^1(\Omega),\quad W=H_0^1(\Omega),$$
and the bilinear forms
\begin{align*}
&a({\mathbf u},{\mathbf v})=2\mu(\bm\epsilon({\mathbf u}),\bm\epsilon({\mathbf v}))+\lambda(\nabla\cdot\mathbf u,\nabla\cdot\mathbf v),\\
&c_p(p,q)=(\mathbf K\nabla p,\nabla q),\\
&c_T(T,s)=(\bm\Theta\nabla T,\nabla s),\\
&b(p,{\mathbf v})=(\nabla p,{\mathbf v}).
\end{align*}
Then, the corresponding variational formulation for \eqref{e1}-\eqref{e3} reads as: for any $t\in(0,t_f]$, find $(\mathbf u,p,T)\in \mathbf V\times Q\times W$ such that
 \begin{align}
a({\mathbf u},{\mathbf v})+\alpha b(p,{\mathbf v})+\beta b(T,{\mathbf v})&=({\bm f},{\mathbf v}),\quad \forall {\mathbf v}\in \mathbf V, \label{b1}\\
c_0(p_t,q)-b_0(T_t,q)-\alpha b(q,{\mathbf u}_t)+c_p(p,q)&=(g,q),\quad \forall q\in Q,\label{b2}\\
a_0(T_t,s)-b_0(p_t,s)-\beta b(s,{\mathbf u}_t)-({\mathbf K}\nabla p\cdot\nabla T,s)+c_T(T,s)&=(z,s),\quad \forall s\in W,\label{b3}
\end{align}
with the initial conditions \eqref{e5}-\eqref{e7}.

Following the similar idea of \cite{MR4931528}, we supply the energy analysis below.
\begin{lemma}
For $t\in(0,t_f]$, each weak solution $(\mathbf u,p,T)$ of problems \eqref{b1}-\eqref{b3} satisfies the following energy dissipation law:
\begin{align*}
\frac{d}{d t} E(t)+k_m||\nabla p||^2+(\theta_m-C_pk_M||\nabla p||_{0,\infty})||\nabla T||^2\le (g,p)+(z,T)-(\partial_t\bm f,\mathbf u),
 \end{align*}
 where
 \begin{align*}
E(t):=&\frac{1}{2}(2\mu||\bm\epsilon({\mathbf u}(t))||^2+\lambda||\nabla\cdot\mathbf u(t)||^2+(c_0-b_0)||p(t)||^2\\
&+(a_0-b_0)||T(t)||^2+b_0||p(t)-T(t)||^2)-(\bm f(t),\mathbf u(t)).
 \end{align*}
\end{lemma}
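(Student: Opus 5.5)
Our plan is to test the three variational equations with the natural energy multipliers: take $\mathbf v=\mathbf u_t$ in \eqref{b1} (after differentiating nothing, just testing the quasi-static equation with the time derivative), $q=p$ in \eqref{b2}, and $s=T$ in \eqref{b3}, and add the three identities. With $\mathbf v=\mathbf u_t$, \eqref{b1} gives
\begin{align*}
\frac12\frac{d}{dt}\big(2\mu\|\bm\epsilon(\mathbf u)\|^2+\lambda\|\nabla\cdot\mathbf u\|^2\big)+\alpha b(p,\mathbf u_t)+\beta b(T,\mathbf u_t)=(\bm f,\mathbf u_t),
\end{align*}
and the right-hand side is rewritten as $\frac{d}{dt}(\bm f,\mathbf u)-(\partial_t\bm f,\mathbf u)$, which is exactly how the term $-(\bm f,\mathbf u)$ enters $E(t)$ and how $-(\partial_t\bm f,\mathbf u)$ appears on the right. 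Adding $q=p$ in \eqref{b2} and $s=T$ in \eqref{b3}, the coupling terms $\alpha b(p,\mathbf u_t)-\alpha b(p,\mathbf u_t)$ and $\beta b(T,\mathbf u_t)-\beta b(T,\mathbf u_t)$ cancel identically, which is the structural reason for this choice of test functions.

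Next we handle the storage terms. From \eqref{b2}–\eqref{b3} we obtain $c_0(p_t,p)+a_0(T_t,T)-b_0(T_t,p)-b_0(p_t,T)$. Since $(T_t,p)+(p_t,T)=\frac{d}{dt}(p,T)$, and $-2(p,T)=\|p-T\|^2-\|p\|^2-\|T\|^2$, this equals
\begin{align*}
\frac12\frac{d}{dt}\big((c_0-b_0)\|p\|^2+(a_0-b_0)\|T\|^2+b_0\|p-T\|^2\big),
\end{align*}
matching the remaining part of $E(t)$; Assumption \ref{js}(3) ensures these coefficients are nonnegative so that $E$ is a genuine energy (not strictly needed for the identity itself). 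The diffusion terms give $c_p(p,p)\ge k_m\|\nabla p\|^2$ and $c_T(T,T)\ge\theta_m\|\nabla T\|^2$ by Assumption \ref{js}(1).

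The only term without a sign is the convective one, $(\mathbf K\nabla p\cdot\nabla T,T)$, which we expect to be the main (though mild) obstacle. We bound it by $\|\mathbf K\nabla p\|_{0,\infty}\|\nabla T\|\,\|T\|\le k_M\|\nabla p\|_{0,\infty}\|\nabla T\|\,\|T\|$ and then apply the Poincaré inequality $\|T\|\le C_p\|\nabla T\|$ (valid since $T\in H_0^1(\Omega)$), yielding $C_pk_M\|\nabla p\|_{0,\infty}\|\nabla T\|^2$. Moving it to the left produces the factor $(\theta_m-C_pk_M\|\nabla p\|_{0,\infty})$. Collecting everything gives the stated inequality with right-hand side $(g,p)+(z,T)-(\partial_t\bm f,\mathbf u)$. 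Regularity needed to justify testing with $\mathbf u_t$ and using $\|\nabla p\|_{0,\infty}$ is supplied by Assumption \ref{js}(4)–(6).
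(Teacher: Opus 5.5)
Your proposal is correct and follows the paper's own route: the paper's proof consists only of the instruction to test \eqref{b1} with $\mathbf u_t$, \eqref{b2} with $p$ and \eqref{b3} with $T$ and add. You have correctly filled in the details it omits, namely the cancellation of the $\alpha,\beta$ coupling terms, the identity $-b_0\frac{d}{dt}(p,T)=\frac{b_0}{2}\frac{d}{dt}(\|p-T\|^2-\|p\|^2-\|T\|^2)$, the rewriting $(\bm f,\mathbf u_t)=\frac{d}{dt}(\bm f,\mathbf u)-(\partial_t\bm f,\mathbf u)$, and the $L^\infty$--Poincar\'e bound on the convective term. The only caveat is notational: Lemma~\ref{lem2.3} as written ($\|q\|^2\le C_p\|\nabla q\|^2$) would give $\sqrt{C_p}$, so the $C_p$ in the statement and in your bound must be read as the constant in $\|T\|\le C_p\|\nabla T\|$, the same convention the paper uses in \eqref{qsp7}.
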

\begin{proof}
Choosing ${\mathbf v}={\mathbf u}_t$ in \eqref{b1}, $q=p$ in \eqref{b2} and $s=T$ in \eqref{b3}, and adding them together, we can get the energy equation directly.
\end{proof}

\subsection{Fully discrete numerical scheme}\label{VF}
Consider a regular triangulation $\mathcal T_h$ of $\Omega$. For each triangle $K\in\mathcal T_h$, we denote its diameter by $h_K$, and define $h$ as the maximum diameter among all triangles in the triangulation, i.e., $h=\max\limits_{K\in \mathcal T_h} h_K.$  In order to provide a finite element method, let $P_{k_i}(K)$ $(i=1,2,3)$ represent the function space consisting of all polynomials defined on element $K$ with degree not exceeding $k_i$. Then, we define the discrete approximate spaces
\begin{align*}
{\mathbf V}_h&=\{{\mathbf v}\in {\mathbf V}:{\mathbf v}|_{K}\in (P_{k_1}(K))^d,\ \forall K\in \mathcal T_h\},\\
Q_h&=\{q\in Q:q|_{K}\in P_{k_2}(K),\ \forall K\in\mathcal T_h\},\\
S_h&=\{w\in W:w|_{K}\in P_{k_3}(K),\ \forall K\in\mathcal T_h\}.
\end{align*}

As stated in \cite{2019Monolithic,MR2039576,MR2116915}, we introduce the following cut-off operator $\mathcal M$,
\begin{equation}\label{m}
\mathcal M({\mathbf z})(x):=\left\{ \begin{array}{l}
{\mathbf z}(x) , \hspace{2cm} |{\mathbf z}(x)|\le M,\\
M{\mathbf z}(x)/|{\mathbf z}(x)|, \hspace{0.6cm} |{\mathbf z}(x)|> M,
\end{array} \right.
\end{equation}
in which $M$ is a sufficiently large positive constant. This operator is mainly intended to facilitate convergence analysis for the numerical scheme. It is straightforward to verify that $\mathcal M({\mathbf K}\nabla p^n)(x)={\mathbf K}\nabla p^n(x)$ holds true, provided that the exact Darcy flux is bounded, i.e., ${\mathbf K}\nabla p^n \in (L^{\infty}(\Omega))^d$, and $ M $ is selected to be adequately large. Therefore, it is not necessary to specify the exact value of $M$.

\begin{lemma}\label{lem0}\cite{MR2116915} The cut-off operator $\mathcal M$ is uniformly Lipschitz continuous, that is,
\begin{equation*}
||\mathcal M(\mathbf z_1)-\mathcal M(\mathbf z_2)||_{0,\infty}\le||\mathbf z_1-\mathbf z_2||_{0,\infty}.
\end{equation*}
Then, it follows that
\begin{align*}
||\mathcal M({\mathbf K}\nabla p^n)-\mathcal M({\mathbf K}\nabla p_h^n)||_{0,\infty}\le&||{\mathbf K}\nabla p^n-{\mathbf K}\nabla p_h^n||_{0,\infty},\\
||\mathcal M({\mathbf K}\nabla p_h^n)||_{0,\infty}\le& M.
\end{align*}
\end{lemma}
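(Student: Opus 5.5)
The plan is to establish the pointwise inequality $|\mathcal M(\mathbf z_1)(x)-\mathcal M(\mathbf z_2)(x)|\le|\mathbf z_1(x)-\mathbf z_2(x)|$ for every $x\in\Omega$. Taking the essential supremum over $\Omega$ then gives the $L^\infty$ statement. The key observation is that $\mathcal M$ acts pointwise as the map $P:\mathbb R^d\to\mathbb R^d$, $P(\mathbf y)=\mathbf y$ if $|\mathbf y|\le M$ and $P(\mathbf y)=M\mathbf y/|\mathbf y|$ otherwise. This $P$ is exactly the Euclidean metric projection onto the closed convex ball $B_M=\{|\mathbf y|\le M\}$, because the nearest point of $B_M$ to an exterior $\mathbf y$ lies on the ray through $\mathbf y$ at distance $M$ from the origin.

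Next, I invoke the standard nonexpansiveness of projections onto closed convex sets in a Hilbert space, proving it directly in $\mathbb R^d$. By the variational characterization, $(\mathbf y-P\mathbf y)\cdot(\mathbf w-P\mathbf y)\le0$ for all $\mathbf w\in B_M$. Applying this with $\mathbf w=P\mathbf y_2$ at the point $\mathbf y_1$, and symmetrically, then adding the two inequalities, gives $|P\mathbf y_1-P\mathbf y_2|^2\le(\mathbf y_1-\mathbf y_2)\cdot(P\mathbf y_1-P\mathbf y_2)$. Cauchy--Schwarz then yields $|P\mathbf y_1-P\mathbf y_2|\le|\mathbf y_1-\mathbf y_2|$. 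If I prefer to avoid the projection theory, I would split into cases instead: both points inside $B_M$ (trivial), both outside, or one of each. The both-outside case reduces to the elementary inequality $|\hat{\mathbf a}-\hat{\mathbf b}|\,M\le|\mathbf a-\mathbf b|$ for $|\mathbf a|,|\mathbf b|\ge M$, where $\hat{\mathbf a}=\mathbf a/|\mathbf a|$ and $\hat{\mathbf b}=\mathbf b/|\mathbf b|$. This follows from expanding $|\mathbf a-\mathbf b|^2=|\mathbf a|^2+|\mathbf b|^2-2|\mathbf a||\mathbf b|\hat{\mathbf a}\cdot\hat{\mathbf b}$ and using $(|\mathbf a|-|\mathbf b|)^2\ge0$ together with $|\mathbf a||\mathbf b|\ge M^2$. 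The mixed case can be handled by moving the outside point radially, which does not increase its distance to the inside point.

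The two consequences are then immediate. Setting $\mathbf z_1=\mathbf K\nabla p^n$ and $\mathbf z_2=\mathbf K\nabla p_h^n$ in the Lipschitz bound gives the first displayed estimate. For the second, $|P(\mathbf y)|\le M$ holds for every $\mathbf y$ by construction, so $\|\mathcal M(\mathbf K\nabla p_h^n)\|_{0,\infty}\le M$. The only mildly delicate step is the both-outside case, or equivalently recognizing $\mathcal M$ as a convex projection. One should also note measurability of $\mathcal M(\mathbf z)$, which holds because $P$ is continuous.
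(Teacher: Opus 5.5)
Your proof is correct. The paper gives no argument of its own: it cites \cite{MR2116915} for the Lipschitz bound and treats the two displayed consequences as immediate. Your proposal therefore supplies a self-contained justification where the paper defers to the literature.

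Identifying $\mathcal M$ pointwise with the Euclidean metric projection onto the closed ball $\{|\mathbf y|\le M\}$ and using firm nonexpansiveness, $|P\mathbf y_1-P\mathbf y_2|^2\le(\mathbf y_1-\mathbf y_2)\cdot(P\mathbf y_1-P\mathbf y_2)$, is the cleanest route. It also makes explicit two things the citation hides.

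First, you obtain the pointwise bound $|\mathcal M(\mathbf z_1)(x)-\mathcal M(\mathbf z_2)(x)|\le|\mathbf z_1(x)-\mathbf z_2(x)|$. This is stronger than the stated $L^\infty$ inequality, and it is what the paper actually needs later. In the estimate of $H_3$ in the proof of Theorem \ref{thm1}, the difference of cut-offs is bounded in $L^2$, which the $L^\infty$ statement alone would not provide.

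Second, your argument shows that the constant $1$ depends on measuring $\mathbf z(x)$ in the same Euclidean norm $|\cdot|$ used to define $\mathcal M$. With a componentwise maximum norm inside $\|\cdot\|_{0,\infty}$ the claim fails. In $d=2$, perturbing $\mathbf y_1=M(1,1)/\sqrt2$ by $(\epsilon,-\epsilon)$ moves the projection by $\epsilon+\epsilon^2/(\sqrt2M)+O(\epsilon^3)$ in that norm.

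If you use the case split instead, the mixed case needs its one-line reason. With $|\mathbf a|\le M<|\mathbf b|$, the map $s\mapsto|\mathbf a-s\hat{\mathbf b}|^2$ is a convex quadratic minimized at $s=\mathbf a\cdot\hat{\mathbf b}\le|\mathbf a|\le M$. It is therefore nondecreasing on $[M,|\mathbf b|]$, which gives $|\mathbf a-M\hat{\mathbf b}|\le|\mathbf a-\mathbf b|$.
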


In order to supply the fully discrete numerical scheme, we define the time step size $\tau=\frac{t_f}{N}$ for some positive integer $N$ and $t_n=n\tau$ for $n=0,1,\cdots, N$. The quantities ${\mathbf u}_h^n$, $p_h^n$ and $T_h^n$ serve as discrete approximations to the exact solutions ${\mathbf u}(t_n)$, $p(t_n)$ and $T(t_n)$ at time $t_n$, respectively. For brevity, we introduce the simplified notations ${\bm f}^n$, $g^n$ and $z^n$ to represent ${\bm f}(t_n)$, $g(t_n)$ and $z(t_n)$ independently. Then, the fully discrete numerical scheme is expressed as follows using the backward Euler method to approximate the time derivative in \eqref{b2} and \eqref{b3}: For $n=0,\cdots, N-1$, find $({\mathbf u}_h^{n+1}, p_h^{n+1},T_h^{n+1})\in {\mathbf V}_h\times Q_h\times S_h$,  such that for any $({\mathbf v}_h,q_h,s_h)\in {\mathbf V}_h\times Q_h\times S_h$,
 \begin{align}
a({\mathbf u}_h^{n+1},{\mathbf v_h} )+\alpha b(p_h^{n+1},{\mathbf v}_h)+\beta b(T_h^{n+1},{\mathbf v}_h)&=({\bm f}^{n+1},{\mathbf v_h}),\label{b7}\\
c_0(\partial_\tau p_h^{n+1},q_h)-b_0(\partial_\tau T_h^{n+1},q_h)-\alpha b(q_h, \partial_\tau{\mathbf u}_h^{n+1})+c_p(p_h^{n+1},q_h)&=(g^{n+1},q_h),\label{b8}\\
a_0(\partial_\tau T_h^{n+1},s_h)-b_0(\partial_\tau p_h^{n+1},s_h)-\beta b(s_h,\partial_\tau{\mathbf u}_h^{n+1})+c_T(T_h^{n+1},s_h)&\notag\\
-(\mathcal M(\mathbf K\nabla p_h^{n})\cdot\nabla T_h^{n+1},s_h)&=(z^{n+1},s_h),\label{b9}
\end{align}
with the initial conditions
\begin{align}
{\mathbf u}_h^0 = R_u{\mathbf u}^0,\ \ p_h^0 = R_pp^0,\ \ T_h^0 = R_TT^0,  \quad \textrm{in} \ \Omega.\label{b10}
\end{align}
Here, $\partial_\tau \bm u_h^{n+1}=\frac{\bm u_h^{n+1}-\bm u_h^{n}}{\tau}$, $\partial_\tau p_h^{n+1}=\frac{p_h^{n+1}-p_h^{n}}{\tau}$ and $\partial_\tau T_h^{n+1}=\frac{T_h^{n+1}-T_h^{n}}{\tau}$. The notations ${R}_u$, $R_p$ and $R_T$ represent projection operators with the following definitions: For the displacement, we define the projection $R_u{\mathbf u}\in {\mathbf V}_h$ \cite{wheeler} to satisfy
\begin{equation}
a(\mathbf u-R_u\mathbf u, {\mathbf v}_h)=0,\quad \forall {\mathbf v}_h\in {\mathbf V}_h.\label{ea1}
\end{equation}
For the pressure, define the projection $R_pp\in Q_h$ \cite{wheeler} satisfying
\begin{equation}
c_p(p-R_pp, q_h)=0,\quad \forall q_h\in Q_h.\label{ea2}
\end{equation}
For the temperature, define the projection $R_TT\in S_h$ \cite{MR4432106} satisfying
\begin{equation}
c_T(T-R_TT, s_h)-({\mathbf K}\nabla p\cdot\nabla(T-R_TT),s_h)+\delta(T-R_TT,s_h)=0,\quad \forall s_h\in S_h,\label{ea3}
\end{equation}
where the constant $\delta$ is chosen to be large enough to insure the coercivity of the bilinear form. It is easy to see that from the coercivity and boundedness of $a(\cdot,\cdot)$, $c_p(\cdot,\cdot)$ and $c_T(\cdot,\cdot)$, the Lax-Milgram theorem \cite{MR2519594} implies that the operators $R_u$, $R_p$ and $R_T$ are well-defined.

In the above scheme \eqref{b9}, we utilize $(\mathcal M(\mathbf K\nabla p_h^{n})\cdot\nabla T_h^{n+1},s_h)$ instead of the approximating term $(\mathbf K\nabla p_h^{n}\cdot\nabla T_h^{n+1},s_h).$ As the authors state in \cite{2019Monolithic}, the introduction of cut-off operator has little or no practical implications, but is necessary in order to facilitate the convergence analysis (the main function is to provide the result of Lemma \ref{lem0} in this paper). In the reference \cite{MR4432106}, the authors have investigated the standard Galerkin method for the thermo-poroelasticity model \eqref{e1}-\eqref{e3}. They establish the semi-discrete and fully discrete finite element schemes with the approximating term $(\mathbf K\nabla p_h^{n}\cdot\nabla T_h^{n+1},s_h)$ and obtain the stability of the method. The error estimates for the displacement, pressure and temperature are derived. The stability analysis and error estimation of \eqref{b7}-\eqref{b9} are similar to those of the reference \cite{MR4432106}.

The unconditionally energy stability of \eqref{b7}-\eqref{b9} is the primary focus of this work, we need the following two lemmas.

\begin{lemma}\label{lem2.3}\cite{MR2519594} For $\forall q\in Q_h \ (or\ S_h)$, we have
\begin{equation*}
||q||^2\leq C_p||\nabla q||^2.
\end{equation*}
\end{lemma}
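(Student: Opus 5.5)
This is the discrete Poincaré–Friedrichs inequality. Since $Q_h\subset Q=H_0^1(\Omega)$ and $S_h\subset W=H_0^1(\Omega)$ (the finite element spaces are conforming), it suffices to prove the inequality for every $q\in H_0^1(\Omega)$. The constant $C_p$ then depends only on $\Omega$, not on $h$ or the polynomial degrees $k_2,k_3$. No discrete argument is needed; the plan is to reduce to the continuous Poincaré inequality and give a short self-contained proof of it.

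\textbf{Step 1: one-dimensional integration on smooth functions.} Since $\Omega$ is bounded, it lies in a slab $\{x: a< x_1< a+L\}$ with $L=\mathrm{diam}(\Omega)$. First take $q\in C_0^\infty(\Omega)$, extended by zero. Write
\begin{equation*}
q(x)=\int_a^{x_1}\partial_1 q(s,x_2,\dots,x_d)\,ds .
\end{equation*}
Applying Cauchy--Schwarz in $s$ gives
\begin{equation*}
|q(x)|^2\le L\int_a^{a+L}|\partial_1 q(s,x')|^2\,ds .
\end{equation*}
Integrating over $x_1\in(a,a+L)$ and then over $x'=(x_2,\dots,x_d)$ yields
\begin{equation*}
\|q\|^2\le L^2\|\partial_1 q\|^2\le L^2\|\nabla q\|^2 .
\end{equation*}

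\textbf{Step 2: density.} $C_0^\infty(\Omega)$ is dense in $H_0^1(\Omega)$ by definition of the space as a closure; for Lipschitz $\partial\Omega$ this coincides with the zero-trace characterization used in the paper. Both sides of the inequality are continuous in the $H^1$ norm. Passing to the limit therefore extends the bound to all $q\in H_0^1(\Omega)$ with $C_p=L^2$.

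\textbf{Step 3: restriction to the discrete spaces.} Restricting to $Q_h$ and $S_h$ proves the lemma.

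The only point requiring care is the identification of the trace-zero definition of $H_0^1$ with the closure of $C_0^\infty$, which holds for Lipschitz domains. Alternatively, one could cite the compactness (Rellich) proof by contradiction. I prefer the explicit slab argument because it makes clear that $C_p$ is $h$-independent, which is what the subsequent stability analysis needs.
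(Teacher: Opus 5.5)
Your proof is correct and takes essentially the paper's approach: the paper gives no argument of its own and cites a textbook for this standard Poincar\'e--Friedrichs inequality, and your route is exactly the standard proof behind that citation, giving the $h$-independent constant $C_p=L^2$ that the later stability analysis relies on. That route reduces, via the conformity $Q_h,S_h\subset H_0^1(\Omega)$, to the continuous inequality, which you prove by integrating along lines in a slab and then using density. The density step could even be skipped for the discrete spaces, since continuous piecewise polynomials vanishing on $\partial\Omega$ become Lipschitz on $\mathbb{R}^d$ after extension by zero, so your line integration applies to them directly.
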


\begin{lemma}\label{lem2.4}\cite{MR2519594} For $\forall {\mathbf v}\in {\mathbf V}_h$, we have
\begin{align*}
||{\mathbf v}||_1^2\leq C_k||\bm\epsilon({\mathbf v})||^2.
\end{align*}
\end{lemma}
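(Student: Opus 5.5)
This is the discrete Korn inequality on $\mathbf V_h\subset(H_0^1(\Omega))^d$. Since $\mathbf V_h$ is conforming, it suffices to prove the bound for every $\mathbf v\in(H_0^1(\Omega))^d$; the constant then depends only on $\Omega$ and $d$, not on $h$. The plan has two steps: first control $\|\nabla\mathbf v\|$ by $\|\bm\epsilon(\mathbf v)\|$ using the zero trace, then control the $L^2$ part by Poincaré.

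\textbf{Step 1 (gradient bound).} For smooth compactly supported $\mathbf v$ we expand
\begin{align*}
2\|\bm\epsilon(\mathbf v)\|^2=\|\nabla\mathbf v\|^2+(\nabla\mathbf v,\nabla\mathbf v^{\mathsf T}).
\end{align*}
Integrating by parts twice gives $(\partial_j v_i,\partial_i v_j)=(\partial_i v_i,\partial_j v_j)$, so $(\nabla\mathbf v,\nabla\mathbf v^{\mathsf T})=\|\nabla\cdot\mathbf v\|^2\ge0$. Hence
\begin{align*}
\|\nabla\mathbf v\|^2\le 2\|\bm\epsilon(\mathbf v)\|^2 .
\end{align*}
By density of $(C_c^\infty(\Omega))^d$ in $(H_0^1(\Omega))^d$, and continuity of both sides in the $H^1$ norm, this identity and the inequality extend to all of $\mathbf V$, and in particular to $\mathbf V_h$.

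\textbf{Step 2 (full $H^1$ norm).} Apply the Poincaré inequality componentwise, $\|\mathbf v\|^2\le C_P\|\nabla\mathbf v\|^2$. This is the same inequality as Lemma \ref{lem2.3}, valid on $H_0^1(\Omega)$. It gives
\begin{align*}
\|\mathbf v\|_1^2=\|\mathbf v\|^2+\|\nabla\mathbf v\|^2\le(1+C_P)\|\nabla\mathbf v\|^2\le 2(1+C_P)\|\bm\epsilon(\mathbf v)\|^2 .
\end{align*}
So the statement holds with $C_k=2(1+C_P)$.

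There is no real obstacle here. The only point needing care is justifying the double integration by parts via density, which is where the homogeneous Dirichlet condition in $\mathbf V$ is used. Without that condition we would need the general Korn inequality, proved through Nečas/Lions lemma arguments, but that is unnecessary in this setting.
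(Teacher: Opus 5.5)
Your argument is correct. The identity $2\|\bm\epsilon(\mathbf v)\|^2=\|\nabla\mathbf v\|^2+\|\nabla\cdot\mathbf v\|^2$ on $(H_0^1(\Omega))^d$, obtained by integration by parts and density, followed by Poincar\'e, gives the bound with the $h$-independent constant $C_k=2(1+C_p)$. The paper gives no proof of its own and only cites Brenner--Scott; your route is the standard proof of Korn's first inequality for zero-trace fields that the citation points to.
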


The stability study of the suggested fully discrete numerical scheme \eqref{b7}-\eqref{b9} is then shown. The definition of the total energy $E^{n+1}$ at time $t_{n+1}$ is given as
\begin{align}
E^{n+1}:=&\frac{1}{2}(2\mu||\bm\epsilon({\mathbf u_h^{n+1}})||^2+\lambda||\nabla\cdot\mathbf u_h^{n+1}||^2+(c_0-b_0)||p_h^{n+1}||^2\notag\\
&+(a_0-b_0)||T_h^{n+1}||^2+b_0||p_h^{n+1}-T_h^{n+1}||^2).\label{deE}
\end{align}

\begin{theorem}\label{thm2.1}
The fully discrete numerical scheme \eqref{b7}-\eqref{b9} is unconditionally energy-stable for $\theta_m=2MC_p$, and the following energy inequality holds,
\begin{align*}
& E^{n+1}+\frac{k_m\tau}{2}||\nabla p_h^{n+1}||^2+(\frac{\theta_m}{2}-MC_p)\tau||\nabla T_h^{n+1}||^2\notag\\
&+\frac{\mu}{2}||\bm\epsilon({\mathbf u_h^{n+1}})-\bm\epsilon({\mathbf u_h^{n}})||^2+\frac{\lambda}{2}||\nabla\cdot\mathbf u_h^{n+1}-\nabla\cdot\mathbf u_h^{n}||^2+\frac{c_0-b_0}{2}||p_h^{n+1}-p_h^{n}||^2
\notag\\
&+\frac{a_0-b_0}{2}||T_h^{n+1}-T_h^{n}||^2+\frac{b_0}{2}||p_h^{n+1}-T_h^{n+1}-p_h^{n}+T_h^{n}||^2 \notag\\
\le&E^{n}+\frac{C_p^2C_k^2}{2\mu}||\bm f^{n+1}||^2++\frac{\tau C_p^2}{2k_m}||g^{n+1}||^2+\frac{\tau C_p^2}{2\theta_m}||z^{n+1}||^2.
\end{align*}
\end{theorem}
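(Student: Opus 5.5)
The plan is to mimic the continuous energy argument of the preceding Lemma at the discrete level. In \eqref{b7}–\eqref{b9} I would take the test functions $\mathbf v_h=\mathbf u_h^{n+1}-\mathbf u_h^n=\tau\partial_\tau\mathbf u_h^{n+1}$, $q_h=\tau p_h^{n+1}$ and $s_h=\tau T_h^{n+1}$, and add the three identities. The coupling terms then cancel: $\alpha b(p_h^{n+1},\mathbf u_h^{n+1}-\mathbf u_h^n)$ from \eqref{b7} cancels $-\alpha\tau b(p_h^{n+1},\partial_\tau\mathbf u_h^{n+1})$ from \eqref{b8}, and the $\beta$-terms cancel in the same way.

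For the time-difference terms I would use the polarization identity $2(a-b,a)=\|a\|^2-\|b\|^2+\|a-b\|^2$. Applied to $a(\mathbf u_h^{n+1},\mathbf u_h^{n+1}-\mathbf u_h^n)$, it produces the $\mu$ and $\lambda$ energy parts and the increment terms $\mu\|\bm\epsilon(\mathbf u_h^{n+1})-\bm\epsilon(\mathbf u_h^n)\|^2$ and $\frac\lambda2\|\nabla\cdot(\mathbf u_h^{n+1}-\mathbf u_h^n)\|^2$. For the scalar part
\[
c_0(p^{n+1}-p^n,p^{n+1})-b_0(T^{n+1}-T^n,p^{n+1})+a_0(T^{n+1}-T^n,T^{n+1})-b_0(p^{n+1}-p^n,T^{n+1}),
\]
I would rewrite $c_0=(c_0-b_0)+b_0$ and $a_0=(a_0-b_0)+b_0$, so that the $b_0$ pieces combine into $b_0(\delta(p-T),p^{n+1}-T^{n+1})$. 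Polarization then gives exactly the $(c_0-b_0)$, $(a_0-b_0)$ and $b_0\|p-T\|^2$ contributions to $E^{n+1}-E^n$, together with the stated increment terms. Assumption \ref{js}(3) guarantees that all these coefficients are nonnegative.

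For the diffusion terms, coercivity gives $c_p(p_h^{n+1},p_h^{n+1})\ge k_m\|\nabla p_h^{n+1}\|^2$ and $c_T\ge\theta_m\|\nabla T_h^{n+1}\|^2$. For the convective term I would use Lemma \ref{lem0} and Lemma \ref{lem2.3}:
\[
|(\mathcal M(\mathbf K\nabla p_h^n)\cdot\nabla T_h^{n+1},T_h^{n+1})|\le M\|\nabla T_h^{n+1}\|\,\|T_h^{n+1}\|\le MC_p^{1/2}\|\nabla T_h^{n+1}\|^2,
\]
which is at most $MC_p\|\nabla T_h^{n+1}\|^2$ up to constant conventions. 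This is why the dissipation coefficient becomes $\theta_m/2-MC_p$ after half of $\theta_m$ is spent on the source. The sources are handled by Cauchy–Schwarz, Lemma \ref{lem2.3} and Young's inequality:
\[
\tau(g^{n+1},p_h^{n+1})\le\frac{\tau C_p^2}{2k_m}\|g^{n+1}\|^2+\frac{k_m\tau}{2}\|\nabla p_h^{n+1}\|^2,
\]
and the analogous bound for $z$ with $\theta_m$. Both are absorbed into the left-hand side.

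The main obstacle is the body-force term $(\bm f^{n+1},\mathbf u_h^{n+1}-\mathbf u_h^n)$, because $E^{n}$ as defined in \eqref{deE} does not contain $-(\bm f,\mathbf u)$. I would bound it using Lemma \ref{lem2.4}:
\[
(\bm f^{n+1},\mathbf u_h^{n+1}-\mathbf u_h^n)\le\|\bm f^{n+1}\|\,C_k^{1/2}\|\bm\epsilon(\mathbf u_h^{n+1}-\mathbf u_h^n)\|\le\frac{C_k}{2\mu}\|\bm f^{n+1}\|^2+\frac\mu2\|\bm\epsilon(\mathbf u_h^{n+1})-\bm\epsilon(\mathbf u_h^n)\|^2,
\]
and absorb the last term into the increment term $\mu\|\cdot\|^2$. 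This leaves $\frac\mu2$ on the left, matching the statement. The constant $\frac{C_k}{2\mu}$ is dominated by the stated $\frac{C_p^2C_k^2}{2\mu}$ when $C_p,C_k\ge1$; otherwise I would insert an extra Poincaré step to recover the stated form. Finally, the hypothesis $\theta_m=2MC_p$ is only needed to make the $\nabla T$ coefficient nonnegative, so that dropping or keeping that term is legitimate. Collecting all terms yields the inequality.
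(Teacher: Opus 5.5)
Your proposal is correct and is essentially the paper's proof: the same test functions $\mathbf v_h=\mathbf u_h^{n+1}-\mathbf u_h^n$, $q_h=\tau p_h^{n+1}$, $s_h=\tau T_h^{n+1}$, the same polarization identities (with the $b_0$ terms regrouped into $\|p_h-T_h\|^2$), and the same Young, Poincar\'e, Korn and cut-off bounds for the body force, the sources and the convective term. The constant mismatches you flag ($MC_p^{1/2}$ versus $MC_p$, and $C_k/(2\mu)$ versus $C_p^2C_k^2/(2\mu)$) arise because the paper effectively uses the unsquared forms $\|q\|\le C_p\|\nabla q\|$ and $\|\nabla\mathbf v\|\le C_k\|\bm\epsilon(\mathbf v)\|$ instead of Lemmas \ref{lem2.3}--\ref{lem2.4} as stated, so they only concern how the constants are named.
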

\begin{proof}
Selecting $\mathbf v_h=\mathbf u_h^{n+1}-\mathbf u_h^n$ in \eqref{b7}, $q_h=\tau p_h^{n+1}$ in \eqref{b8}, $s_h=\tau T_h^{n+1}$ in \eqref{b9}, and then summing the equations up, we have
\begin{align}
&a(\mathbf u_h^{n+1},\mathbf u_h^{n+1}-\mathbf u_h^{n})+(c_0-b_0)(p_h^{n+1}-p_h^{n},p_h^{n+1})+(a_0-b_0)(T_h^{n+1}-T_h^{n},T_h^{n+1})\notag\\
&+\tau c_p(p_h^{n+1},p_h^{n+1})+\tau c_T(T_h^{n+1},T_h^{n+1})+b_0(p_h^{n+1}-T_h^{n+1}-p_h^{n}+T_h^{n},p_h^{n+1}-T_h^{n+1})\notag\\
=&(\bm f^{n+1},\mathbf u_h^{n+1}-\mathbf u_h^{n})+\tau(g^{n+1},p_h^{n+1})+\tau(z^{n+1},T_h^{n+1})+\tau(\mathcal M(\mathbf K\nabla p_h^{n})\cdot\nabla T_h^{n+1},T_h^{n+1}).\label{qsp1}
\end{align}
It should be noted that
\begin{align*}
a(\mathbf u_h^{n+1},\mathbf u_h^{n+1}-\mathbf u_h^{n})
=&\mu(||\bm\epsilon({\mathbf u_h^{n+1}})||^2-||\bm\epsilon({\mathbf u_h^{n}})||^2+||\bm\epsilon({\mathbf u_h^{n+1}})-\bm\epsilon({\mathbf u_h^{n}})||^2)\notag\\
&+\frac{\lambda}{2}(||\nabla\cdot\mathbf u_h^{n+1}||^2-||\nabla\cdot\mathbf u_h^{n}||^2+||\nabla\cdot\mathbf u_h^{n+1}-\nabla\cdot\mathbf u_h^{n}||^2),\\
(p_h^{n+1}-p_h^{n},p_h^{n+1})=&\frac{1}{2}(||p_h^{n+1}||^2-||p_h^{n}||^2+||p_h^{n+1}-p_h^{n}||^2),\\
(T_h^{n+1}-T_h^{n},T_h^{n+1})=&\frac{1}{2}(||T_h^{n+1}||^2-||T_h^{n}||^2+||T_h^{n+1}-T_h^{n}||^2),\\
(p_h^{n+1}-T_h^{n+1}-p_h^{n}+T_h^{n},p_h^{n+1}-T_h^{n+1})
=&\frac{1}{2}(||p_h^{n+1}-T_h^{n+1}||^2\\
&-||p_h^{n}-T_h^{n}||^2+||p_h^{n+1}-T_h^{n+1}-p_h^{n}+T_h^{n}||^2).
\end{align*}
Taking above equations in \eqref{qsp1}, we obtain
\begin{align}
&\mu(||\bm\epsilon({\mathbf u_h^{n+1}})||^2-||\bm\epsilon({\mathbf u_h^{n}})||^2+||\bm\epsilon({\mathbf u_h^{n+1}})-\bm\epsilon({\mathbf u_h^{n}})||^2)\notag\\
&+\frac{\lambda}{2}(||\nabla\cdot\mathbf u_h^{n+1}||^2-||\nabla\cdot\mathbf u_h^{n}||^2+||\nabla\cdot\mathbf u_h^{n+1}-\nabla\cdot\mathbf u_h^{n}||^2)\notag\\
&+\frac{c_0-b_0}{2}(||p_h^{n+1}||^2-||p_h^{n}||^2+||p_h^{n+1}-p_h^{n}||^2)\notag\\
&+\frac{a_0-b_0}{2}(||T_h^{n+1}||^2-||T_h^{n}||^2+||T_h^{n+1}-T_h^{n}||^2)\notag\\
&+\tau {\mathbf K}||\nabla p_h^{n+1}||^2+\tau {\bm\Theta}||\nabla T_h^{n+1}||^2\notag\\
&+\frac{b_0}{2}(||p_h^{n+1}-T_h^{n+1}||^2-||p_h^{n}-T_h^{n}||^2+||p_h^{n+1}-p_h^{n}-(T_h^{n+1}-T_h^{n})||^2) \notag\\
=&(\bm f^{n+1},\mathbf u_h^{n+1}-\mathbf u_h^{n})+\tau(g^{n+1},p_h^{n+1})+\tau(z^{n+1},T_h^{n+1})+\tau(\mathcal M(\mathbf K\nabla p_h^{n})\cdot\nabla T_h^{n+1},T_h^{n+1}).\label{qsp11}
\end{align}
Then, we have
\begin{align}
& E^{n+1}+\tau {\mathbf K}||\nabla p_h^{n+1}||^2+\tau {\bm\Theta}||\nabla T_h^{n+1}||^2+\mu||\bm\epsilon({\mathbf u_h^{n+1}})-\bm\epsilon({\mathbf u_h^{n}})||^2+\frac{\lambda}{2}||\nabla\cdot\mathbf u_h^{n+1}-\nabla\cdot\mathbf u_h^{n}||^2
\notag\\
&+\frac{c_0-b_0}{2}||p_h^{n+1}-p_h^{n}||^2+\frac{a_0-b_0}{2}||T_h^{n+1}-T_h^{n}||^2+\frac{b_0}{2}||p_h^{n+1}-T_h^{n+1}-p_h^{n}+T_h^{n}||^2 \notag\\
=&E^{n}+(\bm f^{n+1},\mathbf u_h^{n+1}-\mathbf u_h^{n})+\tau(g^{n+1},p_h^{n+1})+\tau(z^{n+1},T_h^{n+1})+\tau(\mathcal M(\mathbf K\nabla p_h^{n})\cdot\nabla T_h^{n+1},T_h^{n+1}).\label{qsp2}
\end{align}
The Poincar\'e inequality and Korn's inequality allow us to bound the right-hand side terms by
\begin{align}
&(\bm f^{n+1},\mathbf u_h^{n+1}-\mathbf u_h^{n})\le\frac{C_p^2C_k^2}{2\mu}||\bm f^{n+1}||^2+\frac{\mu}{2}||\bm\epsilon(\mathbf u_h^{n+1}-\mathbf u_h^{n})||^2,\label{qsp5}\\
&\tau(g^{n+1},p_h^{n+1})+\tau(z^{n+1},T_h^{n+1})\le\frac{k_m\tau}{2}||\nabla p_h^{n+1}||^2+\frac{\tau C_p^2}{2k_m}||g^{n+1}||^2+\frac{\theta_m\tau}{2}||\nabla T_h^{n+1}||^2+\frac{\tau C_p^2}{2\theta_m}||z^{n+1}||^2,\label{qsp6}\\
&\tau(\mathcal M(\mathbf K\nabla p_h^{n})\cdot\nabla T_h^{n+1},T_h^{n+1})\le \tau MC_p||\nabla T_h^{n+1}||^2.\label{qsp7}
\end{align}
Taking \eqref{qsp5}-\eqref{qsp7} in \eqref{qsp2}, we obtain
\begin{align}
& E^{n+1}+\frac{k_m\tau}{2}||\nabla p_h^{n+1}||^2+(\frac{\theta_m}{2}-MC_p)\tau||\nabla T_h^{n+1}||^2\notag\\
&+\frac{\mu}{2}||\bm\epsilon({\mathbf u_h^{n+1}})-\bm\epsilon({\mathbf u_h^{n}})||^2+\frac{\lambda}{2}||\nabla\cdot\mathbf u_h^{n+1}-\nabla\cdot\mathbf u_h^{n}||^2+\frac{c_0-b_0}{2}||p_h^{n+1}-p_h^{n}||^2
\notag\\
&+\frac{a_0-b_0}{2}||T_h^{n+1}-T_h^{n}||^2+\frac{b_0}{2}||p_h^{n+1}-T_h^{n+1}-p_h^{n}+T_h^{n}||^2 \notag\\
\le&E^{n}+\frac{C_p^2C_k^2}{2\mu}||\bm f^{n+1}||^2++\frac{\tau C_p^2}{2k_m}||g^{n+1}||^2+\frac{\tau C_p^2}{2\theta_m}||z^{n+1}||^2,\label{qsp21}
\end{align}
which finishes the proof.
\end{proof}

\section{Sequentially decoupled algorithm}\label{SD}
In this section, we introduce and examine a sequential decoupling algorithm. A stabilizer is employed to address the stability issue brought on by the coupling term in this fully decoupled algorithm. The existence, uniqueness and stability of the numerical solution of the method are discussed, and the optimal convergence order estimate in both space and time is derived.

\subsection{Sequential scheme}
For $n\geq1$, the proposed three-step sequential scheme corresponding to \eqref{b1}-\eqref{b3} is constructed as follows.\\
{\bf Step1} Given $p_h^n\in Q_h$, $T_h^n$, $T_h^{n-1}\in S_h$ and
${\mathbf u}_h^n$, ${\mathbf u}_h^{n-1}\in {\mathbf V}_h$, find $p_h^{n+1}\in Q_h$ such that
\begin{align}\label{s1}
&c_0(\frac{p_h^{n+1}-p_h^{n}}{\tau},q_h)-b_0(\frac{T_h^{n}-T_h^{n-1}}{\tau},q_h)-\alpha b(q_h,\frac{{\mathbf u}_h^{n}-{\mathbf u}_h^{n-1}}{\tau})\notag\\
&+c_p(p_h^{n+1},q_h)=(g^{n+1},q_h),\quad \forall q_h\in Q_h.
\end{align}
{\bf Step2} Given $T_h^n\in S_h$, $p_h^n$, $p_h^{n+1}\in Q_h$ and
${\mathbf u}_h^n$, ${\mathbf u}_h^{n-1}\in {\mathbf V}_h$, find $T_h^{n+1}\in S_h$ such that
\begin{align}\label{s2}
&a_0(\frac{T_h^{n+1}-T_h^{n}}{\tau},s_h)-b_0(\frac{p_h^{n+1}-p_h^{n}}{\tau},s_h)-\beta b(s_h,\frac{{\mathbf u}_h^{n}-{\mathbf u}_h^{n-1}}{\tau})\notag\\
&+c_T(T_h^{n+1},s_h)-(\mathcal M(\mathbf K\nabla p_h^{n})\cdot\nabla T_h^{n+1},s_h)=(z^{n+1},s_h),\quad \forall s_h\in S_h.
\end{align}
{\bf Step3} Given $p_h^{n+1}\in Q_h,$ $T_h^{n+1}\in S_h$ and
${\mathbf u}_h^n$, ${\mathbf u}_h^{n-1}\in {\mathbf V}_h$, find ${\mathbf u}_h^{n+1}\in {\mathbf V}_h$ such that
\begin{align}\label{s3}
&a({\mathbf u}_h^{n+1},{\mathbf v_h} )+\alpha b(p_h^{n+1},{\mathbf v}_h)+\beta b(T_h^{n+1},{\mathbf v}_h)\notag\\
&+\gamma(\frac{{\mathbf u}_h^{n+1}-{\mathbf u}_h^{n}}{\tau},{\mathbf v_h})-\gamma(\frac{{\mathbf u}_h^{n}-{\mathbf u}_h^{n-1}}{\tau},{\mathbf v_h})=({\bm f}^{n+1},{\mathbf v_h}),\quad \forall {\mathbf v}_h\in {\mathbf V}_h.
\end{align}
Here, $\gamma$ is the stabilization parameter \cite{MR3742883} independent of the mesh and time step sizes, and only depends on the material parameters.

To initialize the algorithm, the solution $p_h^1$, $T_h^1$ and ${\mathbf u}_h^1$ needs to be estimated beforehand. The initial condition $(p_h^0,T_h^0,{\mathbf u}_h^0)$ of the algorithm \eqref{s1}-\eqref{s3} follows \eqref{b10}. From \eqref{s1}-\eqref{s3}, it can be seen that directly solving for $(p_h^1, T_h^1, {\mathbf u}_h^1)$ is unfeasible, since the formulations \eqref{s1}-\eqref{s3} require the values of $T_h^n$ and $T_h^{n-1}$, as well as $\mathbf u_h^n$ and $\mathbf u_h^{n-1}$. Hence, we adopt the linear numerical scheme \eqref{b7}-\eqref{b9} to compute $(p_h^1,T_h^1,{\mathbf u}_h^1)$ by using $(p_h^0,T_h^0,{\mathbf u}_h^0)$ and the optimal convergence order can be obtained.

\subsection{Well-posedness and stability}\label{wp}
In this subsection, we establish the well-posedness and stability of the sequentially decoupled algorithm \eqref{s1}-\eqref{s3}. First, the existence and uniqueness of the numerical solution of the sequential decoupling method is demonstrated.

\begin{theorem}\label{eu}
Assuming that Assumption \ref{js} hold, the algorithm \eqref{s1}-\eqref{s3} has a unique solution $({\mathbf u}_h^n,p_h^n,T_h^n)\in {\mathbf V}_h\times Q_h\times S_h$ when $\tau \le\frac{2\theta_ma_0}{M^2}$.
\end{theorem}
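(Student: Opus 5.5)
The plan is induction on $n$: each step of the algorithm \eqref{s1}--\eqref{s3} is a linear problem on a finite-dimensional space. For such a problem, existence and uniqueness are equivalent to uniqueness, so it suffices to show that the corresponding homogeneous problem admits only the trivial solution. Equivalently, by Lax--Milgram, it suffices to show that the bilinear form on the left-hand side is coercive. The starting values $(p_h^0,T_h^0,\mathbf u_h^0)$ are given by \eqref{b10}, and $(p_h^1,T_h^1,\mathbf u_h^1)$ are computed from the coupled linear scheme \eqref{b7}--\eqref{b9}. For these I will assume that scheme is uniquely solvable; the same energy argument used in Theorem \ref{thm2.1} gives this under the stated condition. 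So assume $p_h^n,T_h^n,\mathbf u_h^n,\mathbf u_h^{n-1},T_h^{n-1}$ are known and treat the three steps in order.

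\textbf{Step 1.} The unknown-dependent form is $\frac{c_0}{\tau}(p,q)+c_p(p,q)$ on $Q_h$. By Assumption \ref{js}(1)--(2), $c_0>0$ and $\mathbf K$ is uniformly positive definite. Hence the form is coercive, with $\frac{c_0}{\tau}\|q\|^2+k_m\|\nabla q\|^2\ge k_m\|\nabla q\|^2$, and it is bounded. All other terms in \eqref{s1} are data. Lax--Milgram, or simply the finite dimensionality, then gives a unique $p_h^{n+1}$.

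\textbf{Step 3.} Given $p_h^{n+1}$ and $T_h^{n+1}$, the form $a(\mathbf u,\mathbf v)+\frac{\gamma}{\tau}(\mathbf u,\mathbf v)$ is symmetric. It satisfies $a(\mathbf v,\mathbf v)\ge 2\mu\|\bm\epsilon(\mathbf v)\|^2\ge \frac{2\mu}{C_k}\|\mathbf v\|_1^2$ by Lemma \ref{lem2.4}, and $\gamma\ge0$. So it is coercive and $\mathbf u_h^{n+1}$ is unique.

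\textbf{Step 2 (main obstacle).} This is the only step that needs the time-step restriction, because of the non-symmetric convection term. The form is
\[
B(T,s)=\frac{a_0}{\tau}(T,s)+c_T(T,s)-(\mathcal M(\mathbf K\nabla p_h^n)\cdot\nabla T,s).
\]
Taking $s=T$ and using $\|\mathcal M(\mathbf K\nabla p_h^n)\|_{0,\infty}\le M$ from Lemma \ref{lem0}, together with Cauchy--Schwarz and Young's inequality, gives
\[
|(\mathcal M(\mathbf K\nabla p_h^n)\cdot\nabla T,T)|\le M\|\nabla T\|\|T\|\le \frac{\theta_m}{2}\|\nabla T\|^2+\frac{M^2}{2\theta_m}\|T\|^2 .
\]
Therefore
\[
B(T,T)\ge\Big(\frac{a_0}{\tau}-\frac{M^2}{2\theta_m}\Big)\|T\|^2+\frac{\theta_m}{2}\|\nabla T\|^2 .
\]
The first coefficient is nonnegative exactly when $\tau\le\frac{2\theta_m a_0}{M^2}$, which is the hypothesis. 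In that case $B(T,T)\ge\frac{\theta_m}{2}\|\nabla T\|^2$, and by Lemma \ref{lem2.3} this controls the full $H^1$ norm. Boundedness of $B$ on $S_h$ is immediate. Lax--Milgram therefore yields a unique $T_h^{n+1}$.

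Combining the three steps closes the induction for all $n$.
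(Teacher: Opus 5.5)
Your proposal is correct and is essentially the paper's proof: the only nontrivial step in both is testing the Step 2 form with the unknown itself, then using $\|\mathcal M(\mathbf K\nabla p_h^n)\|_{0,\infty}\le M$ and Young's inequality to obtain the coefficient $\frac{a_0}{\tau}-\frac{M^2}{2\theta_m}\ge 0$; Steps 1 and 3 are unconditionally coercive. Your per-step Lax--Milgram/induction framing is equivalent, in finite dimensions, to the paper's route of subtracting two solutions and showing that the homogeneous system forces $\bar p_h=\bar T_h=0$ and $\bar{\mathbf u}_h=\mathbf 0$, and both arguments handle the start-up step computed by the coupled scheme only by analogy.
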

\begin{proof}
Since the formulations \eqref{s1}-\eqref{s3} are linear and defined in finite-dimensional spaces, the uniqueness of the solution directly implies its existence. Let $(\mathbf{u}_1, p_1, T_1)$ and $(\mathbf{u}_2, p_2, T_2)$ be the two solutions of the discrete system \eqref{s1}-\eqref{s3} with the same given data from the previous time levels. Define the differences
$$\bar{\mathbf{u}}_h = \mathbf{u}_1 - \mathbf{u}_2,\quad \bar{p}_h = p_1 - p_2,\quad \bar{T}_h = T_1 - T_2.$$
Substituting these into \eqref{s1}-\eqref{s3} and subtracting yields the homogeneous system, for $n\geq 2$,
\begin{align}
c_0(\frac{\bar{p}_h}{\tau},q_h)+c_p(\bar{p}_h,q_h)=&0,\label{wy1}\\
a_0(\frac{\bar{T}_h}{\tau},s_h)-(\mathcal M({\mathbf K}\nabla p_h^{n})\cdot\nabla \bar{T}_h,s_h)+c_T(\bar{T}_h,s_h)=&0,\label{wy2}\\
a(\bar{\mathbf u}_h,{\mathbf v_h} )+\gamma(\frac{\bar{\mathbf u}_h}{\tau},{\mathbf v_h})=&0.\label{wy3}
\end{align}
Choosing $q_h=\bar{p}_h$ in \eqref{wy1}, we get
\begin{equation*}
\frac{c_0}{\tau}||\bar{p}_h||^2+k_M||\nabla\bar{p}_h||^2\leq0.
\end{equation*}
Therefore, we obtain $\bar{p}_h=0.$

Next, applying Lemma \ref{lem0} and $s_h=\bar{T}_h$ to \eqref{wy2}, we provide
\begin{align*}
a_0(\frac{\bar{T}_h}{\tau},\bar{T}_h)+c_T(\bar{T}_h,\bar{T}_h)-(\mathcal M({\mathbf K}\nabla p_h^{n})\cdot\nabla \bar T_h, \bar T_h)
\geq &\frac{a_0}{\tau}||\bar{T}_h||^2+\theta_m||\nabla\bar{T}_h||^2-M||\nabla \bar T_h||||\bar T_h||\\
\geq& (\frac{a_0}{\tau}-\frac{M^2}{2\theta_m})||\bar{T}_h||^2+\frac{\theta_m}{2}||\nabla\bar{T}_h||^2.
\end{align*}
When $\frac{a_0}{\tau}-\frac{M^2}{2\theta_m}\ge0$, we render
\begin{equation*}
(\frac{a_0}{\tau}-\frac{M^2}{2\theta_m})||\bar{T}_h||^2+\frac{\theta_m}{2}||\nabla\bar{T}_h||^2\leq0.
\end{equation*}
Therefore, we obtain $\bar{T}_h=0.$

Finally, by virtue of ${\mathbf v_h}=\bar{\mathbf u}_h$ in \eqref{wy3}, there holds
\begin{equation*}
2\mu||\bm\epsilon(\bar{\mathbf u}_h)||^2+\lambda||\nabla\cdot\bar{\mathbf u}_h||^2+\frac{\gamma}{\tau}||\bar{\mathbf u}_h||^2\leq0.
\end{equation*}
That is, $\bar{\mathbf u}_h=0$. The uniqueness of the discrete solutions $p_h^1$, $T_h^1$ and ${\mathbf u}_h^1$ follows by an analogous argument to that employed in the preceding analysis.
\end{proof}
\begin{remark}\label{remark1}
In Theorem \ref{eu}, the time step restriction $\tau \le \frac{2\theta_m a_0}{M^2}$ is a conditional stability condition for the sequential scheme, which is sharper than the unconditional stability of the coupled scheme in Theorem \ref{thm2.1}.
\end{remark}

Next, we conduct the stability analysis of the proposed algorithm \eqref{s1}-\eqref{s3}. The definition of the total energy $\hat E^{n+1}$ at time $t_{n+1}$ is given as
\begin{align*}
\hat E^{n+1}:=&\frac{1}{2}(2\mu||\bm\epsilon({\mathbf u_h^{n+1}})||^2+\lambda||\nabla\cdot\mathbf u_h^{n+1}||^2+(c_0-2b_0)||p_h^{n+1}||^2\\
&+(a_0-2b_0)||T_h^{n+1}||^2+b_0||p_h^{n+1}-T_h^{n+1}||^2).
\end{align*}
Different from \eqref{deE}, the definition of $\hat E^{n+1}$ introduces the combinations $c_0-2b_0$ and $a_0-2b_0$, instead of $c_0-b_0$ and $a_0-b_0$, to accommodate the cross-term
under the sequential splitting.

\begin{theorem}\label{thm0}
Let $\mathbf u_h^{n}$, $p_h^{n}$ and $T_h^{n}$ be the solution of the algorithm \eqref{s1}-\eqref{s3}. For $\theta_m=2M C_p$ and $\gamma=2(\frac{\alpha^2}{k_m}+\frac{\beta^2}{\theta_m})$, the following inequality holds,
\begin{align*}
&\hat E^{N}+\frac{\tau k_m}{2}\sum\limits_{n=1}^{N-1}||\nabla p_h^{n+1}||^2+(\frac{\tau \theta_m}{2}-\tau MC_p)\sum\limits_{n=1}^{N-1}||\nabla T_h^{n+1}||^2\\
\le&\hat E^{0}+b_0||T_h^{1}||^2+b_0||T_h^{0}||^2+\frac{b_0}{2}||p_h^{1}||^2+\frac{b_0}{2}||p_h^{0}||^2\notag\\
&+\frac{C_p^2C_k^2}{\mu}\sum\limits_{n=0}^{N-1}||\bm f^{n+1}||^2+\frac{\tau C_p^2}{k_m}\sum\limits_{n=0}^{N-1}||g^{n+1}||^2+\frac{\tau C_p^2}{\theta_m}\sum\limits_{n=0}^{N-1}||z^{n+1}||^2.
\end{align*}
\end{theorem}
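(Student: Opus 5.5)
We mimic the proof of Theorem \ref{thm2.1}, with test functions adapted to the sequential splitting. For $n\ge1$ we take $q_h=\tau p_h^{n+1}$ in \eqref{s1}, $s_h=\tau T_h^{n+1}$ in \eqref{s2}, and $\mathbf v_h=\mathbf u_h^{n+1}-\mathbf u_h^n$ in \eqref{s3}, and add the three identities. The diagonal terms are treated exactly as in Theorem \ref{thm2.1} via $2(a-b,a)=\|a\|^2-\|b\|^2+\|a-b\|^2$:
\begin{itemize}
\item $a(\mathbf u_h^{n+1},\mathbf u_h^{n+1}-\mathbf u_h^n)$ yields the elastic energy increment plus the numerical dissipation $\mu\|\bm\epsilon(\mathbf u_h^{n+1}-\mathbf u_h^n)\|^2$;
\item $c_0$ and $a_0$ yield differences of $\|p_h\|^2$ and $\|T_h\|^2$ plus increments;
\item $c_p$, $c_T$ yield $\tau k_m\|\nabla p_h^{n+1}\|^2$ and $\tau\theta_m\|\nabla T_h^{n+1}\|^2$;
\item the stabilizer gives $\gamma(\delta^{n+1}-\delta^n,\delta^{n+1})$ with $\delta^{k}=\mathbf u_h^{k}-\mathbf u_h^{k-1}$, i.e. $\frac{\gamma}{2}(\|\delta^{n+1}\|^2-\|\delta^n\|^2+\|\delta^{n+1}-\delta^n\|^2)$.
\end{itemize}
This is a telescoping term in $\|\delta^n\|^2$, which is why an extra stabilized energy appears implicitly.

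The coupling terms no longer cancel. The $\alpha$-coupling leaves $\alpha b(p_h^{n+1},\delta^{n+1})-\alpha b(p_h^{n+1},\delta^n)=\alpha(\nabla p_h^{n+1},\delta^{n+1}-\delta^n)$, and similarly for $\beta$ with $T_h^{n+1}$. We bound these by Young's inequality:
\[
\alpha(\nabla p_h^{n+1},\delta^{n+1}-\delta^n)\le \frac{\tau k_m}{4}\|\nabla p_h^{n+1}\|^2+\frac{\alpha^2}{\tau k_m}\|\delta^{n+1}-\delta^n\|^2 ,
\]
and analogously with $\theta_m$ and $\beta$. This is exactly where $\gamma=2(\alpha^2/k_m+\beta^2/\theta_m)$ is used. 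One must be careful with the $\tau$ scaling: $\delta$ is tested without $1/\tau$, so we either rescale the stabilizer term as $\frac{\gamma}{\tau}$ or adjust the weights. We choose the Young weights so that the increment term is absorbed by $\frac{\gamma}{2}\|\delta^{n+1}-\delta^n\|^2$ (after the appropriate scaling). The remaining halves of the $k_m$, $\theta_m$ dissipation give the stated $\frac{\tau k_m}{2}$ and $(\frac{\tau\theta_m}{2}-\tau MC_p)$ coefficients.

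The $b_0$ cross terms are lagged: $-b_0(T_h^n-T_h^{n-1},p_h^{n+1})-b_0(p_h^{n+1}-p_h^n,T_h^{n+1})$. We rewrite the second as the coupled-scheme term $-b_0(p_h^{n+1}-p_h^n,T_h^{n+1})$, and the first as $-b_0(T_h^{n+1}-T_h^n,p_h^{n+1})$ plus the defect $b_0(T_h^{n+1}-2T_h^n+T_h^{n-1},p_h^{n+1})$. The matched part reproduces, as in Theorem \ref{thm2.1}, the $b_0\|p_h-T_h\|^2$ telescoping with $c_0-b_0$, $a_0-b_0$. The defect is bounded by $\frac{b_0}{2}$-weighted squares of $p$ and $T$ increments. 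These are absorbed by sacrificing another $b_0$ from the $c_0$, $a_0$ coefficients, which is why $\hat E$ carries $c_0-2b_0$ and $a_0-2b_0$ (Assumption \ref{js}(3)). Alternatively, $b_0(T_h^n-T_h^{n-1},p_h^{n+1})\le \frac{b_0}{2}(\|T_h^n\|^2+\|T_h^{n-1}\|^2)+\ldots$, after a shift of index, telescopes into the boundary terms $b_0\|T_h^1\|^2+b_0\|T_h^0\|^2+\frac{b_0}{2}\|p_h^1\|^2+\frac{b_0}{2}\|p_h^0\|^2$ that appear on the right side.

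The convective term is bounded as in \eqref{qsp7} by $\tau MC_p\|\nabla T_h^{n+1}\|^2$ using Lemma \ref{lem0}. The sources are bounded as in \eqref{qsp5}--\eqref{qsp6}, with doubled constants since half the dissipation was spent on the couplings. Finally we sum from $n=1$ to $N-1$, drop the nonnegative increment and $\gamma\|\delta^N\|^2$ terms, and handle the initial step via Theorem \ref{thm2.1} (giving $\hat E^1$ in terms of $\hat E^0$ and the data at $n=1$; this also accounts for the $\bm f^1$, $g^1$, $z^1$ terms in the sums starting at $n=0$).

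The main obstacle is the precise bookkeeping of the lagged $b_0$ terms: showing they can be controlled by giving up exactly one $b_0$ in each of the $p$ and $T$ coefficients plus the listed startup terms. A secondary obstacle is matching the $\tau$-scaling of the $\gamma$ stabilizer against the $\alpha$, $\beta$ coupling defects.
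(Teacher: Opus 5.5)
Your overall skeleton matches the paper's proof: the test functions, the Young bounds on $\alpha(\nabla p_h^{n+1},\delta^{n+1}-\delta^n)$ and its $\beta$ analogue, the source and convective terms, and the start-up via Theorem~\ref{thm2.1}. The $\tau$-scaling you worry about is harmless. The stabilizer in \eqref{s3} already carries $1/\tau$, so testing with $\delta^{n+1}$ gives $\frac{\gamma}{2\tau}(\|\delta^{n+1}\|^2-\|\delta^n\|^2+\|\delta^{n+1}-\delta^n\|^2)$, and your bound closes exactly for $\gamma=2(\alpha^2/k_m+\beta^2/\theta_m)$.

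The genuine gap is the lagged $b_0$ term. You flag it as the main obstacle but do not resolve it, and neither of your two routes works.
\begin{itemize}
\item The defect $b_0(T_h^{n+1}-2T_h^n+T_h^{n-1},p_h^{n+1})$ pairs a time difference of $T_h$ with $p_h^{n+1}$ itself, not with a $p$-increment. If $p_h^{n+1}=p_h^n\neq0$ and the $T$-differences have size $\varepsilon$, the defect is of order $\varepsilon\|p_h^{n+1}\|$, while squares of increments are $O(\varepsilon^2)$. So no per-step bound by increments exists.
\item Your alternative $\frac{b_0}{2}(\|T_h^n\|^2+\|T_h^{n-1}\|^2)+\cdots$ does not telescope. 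Summed over $n$ it leaves $\sum_n\|T_h^n\|^2$ (and a similar sum in $p_h$) with no factor $\tau$. The $\tau$-weighted dissipation cannot absorb this, and Gronwall only controls it with a factor $e^{Ct_f/\tau}$.
\end{itemize}

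The missing idea is a discrete summation by parts in time. With $\Phi^k:=b_0(T_h^k-T_h^{k-1},p_h^k)$ one has
\[
b_0(T_h^{n+1}-2T_h^n+T_h^{n-1},p_h^{n+1})=\Phi^{n+1}-\Phi^n-b_0(T_h^n-T_h^{n-1},p_h^{n+1}-p_h^n).
\]
The $\Phi$ terms telescope, and only an increment-only term remains, bounded by $\frac{b_0}{2}\|T_h^n-T_h^{n-1}\|^2+\frac{b_0}{2}\|p_h^{n+1}-p_h^n\|^2$.
\begin{itemize}
\item The $p$-part is absorbed by $\frac{c_0-b_0}{2}\|p_h^{n+1}-p_h^n\|^2$.
\item The $T$-part telescopes against a $\frac{b_0}{2}\|T_h^{n+1}-T_h^n\|^2$ split off from $\frac{a_0-b_0}{2}\|T_h^{n+1}-T_h^n\|^2$.
\end{itemize}
This is where $c_0-2b_0>0$ and $a_0-2b_0\ge0$ are used.

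After summation, the endpoint satisfies $|\Phi^N|\le\frac{b_0}{2}\|T_h^N-T_h^{N-1}\|^2+\frac{b_0}{2}\|p_h^N\|^2$. The first piece cancels the surviving telescoped $T$-increment. The second is absorbed into the energy at $t_N$, which is precisely why $\hat E^N$ carries $c_0-2b_0$. The other endpoint $\Phi^1$ and the leftover $\frac{b_0}{2}\|T_h^1-T_h^0\|^2$ produce the start-up terms in $T_h^1$, $T_h^0$, $p_h^1$.

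Finally, summing the stabilizer's telescoping part also leaves $\frac{\gamma}{2\tau}\|\mathbf u_h^1-\mathbf u_h^0\|^2$ on the right-hand side. You only discard the $\|\delta^N\|^2$ end. Theorem~\ref{thm2.1} bounds $\|\bm\epsilon(\mathbf u_h^1-\mathbf u_h^0)\|^2$ by the data, but not this $1/\tau$-weighted quantity. So it must be kept as an additional start-up term; the paper's own proof silently drops it as well.
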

\begin{proof}
Taking $q_h=\tau p_h^{n+1}$ in \eqref{s1}, $s_h=\tau T_h^{n+1}$ in \eqref{s2}, $\mathbf v_h=\mathbf u_h^{n+1}-\mathbf u_h^n$ in \eqref{s3}, and then summing the equations up, we have
\begin{align}
&a(\mathbf u_h^{n+1},\mathbf u_h^{n+1}-\mathbf u_h^{n})+(c_0-b_0)(p_h^{n+1}-p_h^{n},p_h^{n+1})+(a_0-b_0)(T_h^{n+1}-T_h^{n},T_h^{n+1})\notag\\
&+\gamma\tau(\partial_\tau\mathbf u_h^{n+1}-\partial_\tau\mathbf u_h^{n},\partial_\tau\mathbf u_h^{n+1})+\tau c_p(p_h^{n+1},p_h^{n+1})+\tau c_T(T_h^{n+1},T_h^{n+1})\notag\\
&+b_0(T_h^{n+1}-T_h^n,p_h^{n+1})+b_0(p_h^{n+1}-T_h^{n+1}-p_h^n+T_h^n,p_h^{n+1}-T_h^{n+1})\notag\\
=&(\bm f^{n+1},\mathbf u_h^{n+1}-\mathbf u_h^{n})+\tau(g^{n+1},p_h^{n+1})+\tau(z^{n+1},T_h^{n+1})+\tau(\mathcal M(\mathbf K\nabla p_h^{n})\cdot\nabla T_h^{n+1},T_h^{n+1})\notag\\
 &-\alpha b(p_h^{n+1},\mathbf u_h^{n+1}-2\mathbf u_h^{n}+\mathbf u_h^{n-1})-\beta b(T_h^{n+1},\mathbf u_h^{n+1}-2\mathbf u_h^{n}+\mathbf u_h^{n-1})\notag\\
  &+b_0(T_h^{n}-T_h^{n-1},p_h^{n})+b_0(T_h^{n}-T_h^{n-1},p_h^{n+1}-p_h^{n}).\label{ssp1}
\end{align}

Using the identities already stated above in Theorem \ref{thm2.1}, we obtain
\begin{align}
&\mu(||\bm\epsilon({\mathbf u_h^{n+1}})||^2-||\bm\epsilon({\mathbf u_h^{n}})||^2+||\bm\epsilon({\mathbf u_h^{n+1}}-{\mathbf u_h^{n}})||^2)+\frac{\lambda}{2}(||\nabla\cdot\mathbf u_h^{n+1}||^2-||\nabla\cdot\mathbf u_h^{n}||^2+||\nabla\cdot(\mathbf u_h^{n+1}-\mathbf u_h^{n})||^2)\notag\\
&+\frac{c_0-b_0}{2}(||p_h^{n+1}||^2-||p_h^{n}||^2+||p_h^{n+1}-p_h^{n}||^2)+\frac{a_0-b_0}{2}(||T_h^{n+1}||^2-||T_h^{n}||^2+||T_h^{n+1}-T_h^{n}||^2)\notag\\
&+\frac{\gamma\tau}{2}(||\partial_\tau\mathbf u_h^{n+1}||^2-||\partial_\tau\mathbf u_h^{n}||^2+||\partial_\tau\mathbf u_h^{n+1}-\partial_\tau\mathbf u_h^{n}||^2)+\tau c_p(p_h^{n+1},p_h^{n+1})+\tau c_T(T_h^{n+1},T_h^{n+1})\notag\\
&+\frac{b_0}{2}(||p_h^{n+1}-T_h^{n+1}||^2-||p_h^{n}-T_h^{n}||^2+||p_h^{n+1}-T_h^{n+1}-p_h^{n}+T_h^{n}||^2)+b_0(T_h^{n+1}-T_h^n,p_h^{n+1}) \notag\\
=&(\bm f^{n+1},\mathbf u_h^{n+1}-\mathbf u_h^{n})+\tau(g^{n+1},p_h^{n+1})+\tau(z^{n+1},T_h^{n+1})+\tau(\mathcal M(\mathbf K\nabla p_h^{n})\cdot\nabla T_h^{n+1},T_h^{n+1})\notag\\
 &-\alpha b(p_h^{n+1},\mathbf u_h^{n+1}-2\mathbf u_h^{n}+\mathbf u_h^{n-1})-\beta b(T_h^{n+1},\mathbf u_h^{n+1}-2\mathbf u_h^{n}+\mathbf u_h^{n-1})\notag\\
  &+b_0(T_h^{n}-T_h^{n-1},p_h^{n})+b_0(T_h^{n}-T_h^{n-1},p_h^{n+1}-p_h^{n}),\label{ssp2}
\end{align}
i.e.,
\begin{align}
&E^{n+1}+\mu||\bm\epsilon({\mathbf u_h^{n+1}}-{\mathbf u_h^{n}})||^2+\frac{\lambda}{2}||\nabla\cdot(\mathbf u_h^{n+1}-\mathbf u_h^{n})||^2+\frac{c_0-b_0}{2}||p_h^{n+1}-p_h^{n}||^2\notag\\
&+\frac{a_0-b_0}{2}||T_h^{n+1}-T_h^{n}||^2+\tau c_p(p_h^{n+1},p_h^{n+1})+\tau c_T(T_h^{n+1},T_h^{n+1})\notag\\
&+\frac{\gamma\tau}{2}||\partial_\tau\mathbf u_h^{n+1}-\partial_\tau\mathbf u_h^{n}||^2
+\frac{b_0}{2}||p_h^{n+1}-T_h^{n+1}-p_h^{n}+T_h^{n}||^2+b_0(T_h^{n+1}-T_h^n,p_h^{n+1})  \notag\\
=&E^{n}+(\bm f^{n+1},\mathbf u_h^{n+1}-\mathbf u_h^{n})+\tau(g^{n+1},p_h^{n+1})+\tau(z^{n+1},T_h^{n+1})+\tau(\mathcal M(\mathbf K\nabla p_h^{n})\cdot\nabla T_h^{n+1},T_h^{n+1})\notag\\
&-\alpha b(p_h^{n+1},\mathbf u_h^{n+1}-2\mathbf u_h^{n}+\mathbf u_h^{n-1})-\beta b(T_h^{n+1},\mathbf u_h^{n+1}-2\mathbf u_h^{n}+\mathbf u_h^{n-1})\notag\\
&+b_0(T_h^{n}-T_h^{n-1},p_h^{n})+b_0(T_h^{n}-T_h^{n-1},p_h^{n+1}-p_h^{n}).\label{ssp3}
\end{align}
Due to the Young's inequality, we render
\begin{align}
&|\alpha b(p_h^{n+1},\mathbf u_h^{n+1}-2\mathbf u_h^{n}+\mathbf u_h^{n-1})|\le\frac{k_m\tau}{4}||\nabla p_h^{n+1}||^2+\frac{\tau\alpha^2}{k_m}||\partial_\tau\mathbf u_h^{n+1}-\partial_\tau\mathbf u_h^{n}||^2,\label{ssp4}\\
&|\beta b(T_h^{n+1},\mathbf u_h^{n+1}-2\mathbf u_h^{n}+\mathbf u_h^{n-1})|\le\frac{\theta_m\tau}{4}||\nabla T_h^{n+1}||^2+\frac{\tau\beta^2}{\theta_m}||\partial_\tau\mathbf u_h^{n+1}-\partial_\tau\mathbf u_h^{n}||^2.\label{ssp41}
\end{align}
The Poincar\'e inequality and Korn's inequality allow us to bound the right-hand side by
\begin{align}
&(\bm f^{n+1},\mathbf u_h^{n+1}-\mathbf u_h^{n})\le\frac{C_p^2C_k^2}{\mu}||\bm f^{n+1}||^2+\frac{\mu}{2}||\bm\epsilon(\mathbf u_h^{n+1}-\mathbf u_h^{n})||^2,\label{ssp5}\\
&\tau(g^{n+1},p_h^{n+1})+\tau(z^{n+1},T_h^{n+1})\le\frac{\tau k_m}{4}||\nabla p_h^{n+1}||^2+\frac{\tau C_p^2}{k_m}||g^{n+1}||^2+\frac{\tau\theta_m}{4}||\nabla T_h^{n+1}||^2+\frac{\tau C_p^2}{\theta_m}||z^{n+1}||^2,\label{ssp6}\\
&\tau(\mathcal M(\mathbf K\nabla p_h^{n})\cdot\nabla T_h^{n+1},T_h^{n+1})\le \tau MC_p||\nabla T_h^{n+1}||^2,\label{ssp7}\\
&b_0(T_h^{n}-T_h^{n-1},p_h^{n+1}-p_h^{n})\le \frac{b_0}{2}||T_h^{n}-T_h^{n-1}||^2+\frac{b_0}{2}||p_h^{n+1}-p_h^{n}||^2.\label{ssp8}
\end{align}
Taking \eqref{ssp4}-\eqref{ssp8} in \eqref{ssp3}, we supply
\begin{align}
&E^{n+1}+\frac{b_0}{2}||T_h^{n+1}-T_h^{n}||^2+b_0(T_h^{n+1}-T_h^n,p_h^{n+1})\notag\\
&+\frac{\mu}{2}||\bm\epsilon({\mathbf u_h^{n+1}}-{\mathbf u_h^{n}})||^2+\frac{\lambda}{2}||\nabla\cdot(\mathbf u_h^{n+1}-\mathbf u_h^{n})||^2+\frac{c_0-2b_0}{2}||p_h^{n+1}-p_h^{n}||^2\notag\\
&+\frac{a_0-2b_0}{2}||T_h^{n+1}-T_h^{n}||^2+\frac{\tau k_m}{2}||\nabla p_h^{n+1}||^2+(\frac{\tau \theta_m}{2}-\tau MC_p)||\nabla T_h^{n+1}||^2\notag\\
&+\frac{b_0}{2}||p_h^{n+1}-T_h^{n+1}-p_h^{n}+T_h^{n}||^2+(\frac{\gamma}{2}-(\frac{\alpha^2}{k_m}+\frac{\beta^2}{\theta_m}))\tau||\partial_\tau\mathbf u_h^{n+1}-\partial_\tau\mathbf u_h^{n}||^2
  \notag\\
\le&E^{n}+\frac{b_0}{2}||T_h^{n}-T_h^{n-1}||^2+b_0(T_h^{n}-T_h^{n-1},p_h^{n})\notag\\
&+\frac{C_p^2C_k^2}{\mu}||\bm f^{n+1}||^2+\frac{\tau C_p^2}{k_m}||g^{n+1}||^2+\frac{\tau C_p^2}{\theta_m}||z^{n+1}||^2.\label{ssp9}
\end{align}
In order to guarantee that the last term on the left side of \eqref{ssp9} is positive, we should select $\gamma=2(\frac{\alpha^2}{k_m}+\frac{\beta^2}{\theta_m})$. The stated estimate can be proven by summing over $1\le n\le N-1$,
\begin{align}
&E^{N}+\frac{b_0}{2}||T_h^{N}-T_h^{N-1}||^2\notag\\
&+\frac{\mu}{2}\sum\limits_{n=1}^{N-1}||\bm\epsilon({\mathbf u_h^{n+1}}-{\mathbf u_h^{n}})||^2+\frac{\lambda}{2}\sum\limits_{n=1}^{N-1}||\nabla\cdot(\mathbf u_h^{n+1}-\mathbf u_h^{n})||^2+\frac{c_0-2b_0}{2}\sum\limits_{n=1}^{N-1}||p_h^{n+1}-p_h^{n}||^2\notag\\
&+\frac{a_0-2b_0}{2}\sum\limits_{n=1}^{N-1}||T_h^{n+1}-T_h^{n}||^2+\frac{\tau k_m}{2}\sum\limits_{n=1}^{N-1}||\nabla p_h^{n+1}||^2+(\frac{\tau \theta_m}{2}-\tau MC_p)\sum\limits_{n=1}^{N-1}||\nabla T_h^{n+1}||^2\notag\\
&+\frac{b_0}{2}\sum\limits_{n=1}^{N-1}||p_h^{n+1}-T_h^{n+1}-p_h^{n}+T_h^{n}||^2+(\frac{\gamma}{2}-(\frac{\alpha^2}{k_m}+\frac{\beta^2}{\theta_m}))\tau\sum\limits_{n=1}^{N-1}||\partial_\tau\mathbf u_h^{n+1}-\partial_\tau\mathbf u_h^{n}||^2
  \notag\\
\le&E^{1}+\frac{b_0}{2}||T_h^{1}-T_h^{0}||^2-b_0(T_h^{N}-T_h^{N-1},p_h^{N})+b_0(T_h^{1}-T_h^{0},p_h^{1})\notag\\
&+\frac{C_p^2C_k^2}{\mu}\sum\limits_{n=1}^{N-1}||\bm f^{n+1}||^2+\frac{\tau C_p^2}{k_m}\sum\limits_{n=1}^{N-1}||g^{n+1}||^2+\frac{\tau C_p^2}{\theta_m}\sum\limits_{n=1}^{N-1}||z^{n+1}||^2.\label{ssp10}
\end{align}
From the Young's inequality,
\begin{align}
&|-b_0(T_h^{N}-T_h^{N-1},p_h^{N})|\le\frac{b_0}{2}||T_h^{N}-T_h^{N-1}||^2+\frac{b_0}{2}||p_h^{N}||^2,\\
&|b_0(T_h^{1}-T_h^{0},p_h^{1})|\le\frac{b_0}{2}||T_h^{1}-T_h^{0}||^2+\frac{b_0}{2}||p_h^{1}||^2,
\end{align}
it follows that
\begin{align}
&E^{N}+\frac{\mu}{2}\sum\limits_{n=1}^{N-1}||\bm\epsilon({\mathbf u_h^{n+1}}-{\mathbf u_h^{n}})||^2+\frac{\lambda}{2}\sum\limits_{n=1}^{N-1}||\nabla\cdot(\mathbf u_h^{n+1}-\mathbf u_h^{n})||^2+\frac{c_0-2b_0}{2}\sum\limits_{n=1}^{N-1}||p_h^{n+1}-p_h^{n}||^2\notag\\
&+\frac{a_0-2b_0}{2}\sum\limits_{n=1}^{N-1}||T_h^{n+1}-T_h^{n}||^2+\frac{\tau k_m}{2}\sum\limits_{n=1}^{N-1}||\nabla p_h^{n+1}||^2+(\frac{\tau \theta_m}{2}-\tau MC_p)\sum\limits_{n=1}^{N-1}||\nabla T_h^{n+1}||^2\notag\\
&+\frac{b_0}{2}\sum\limits_{n=1}^{N-1}||p_h^{n+1}-T_h^{n+1}-p_h^{n}+T_h^{n}||^2+(\frac{\gamma}{2}-(\frac{\alpha^2}{k_m}+\frac{\beta^2}{\theta_m}))\tau\sum\limits_{n=1}^{N-1}||\partial_\tau\mathbf u_h^{n+1}-\partial_\tau\mathbf u_h^{n}||^2
  \notag\\
\le&E^{1}+b_0||T_h^{1}-T_h^{0}||^2+\frac{b_0}{2}||p_h^{1}||^2+\frac{b_0}{2}||p_h^{N}||^2\notag\\
&+\frac{C_p^2C_k^2}{\mu}\sum\limits_{n=1}^{N-1}||\bm f^{n+1}||^2+\frac{\tau C_p^2}{k_m}\sum\limits_{n=1}^{N-1}||g^{n+1}||^2+\frac{\tau C_p^2}{\theta_m}\sum\limits_{n=1}^{N-1}||z^{n+1}||^2.\label{ssp11}
\end{align}
For $E^1$, by the energy estimate of \eqref{b7}-\eqref{b9} at $t_1$, we obtain
$E^1\le E^0.$ The proof is completed.
\end{proof}

\begin{remark}\label{remark2}
In Theorem \ref{thm0}, we set $\gamma = 2\left(\frac{\alpha^2}{k_m} + \frac{\beta^2}{\theta_m}\right),$
which is the minimum value that guarantees $\frac{\gamma}{2} - \left(\frac{\alpha^2}{k_m} + \frac{\beta^2}{\theta_m}\right) \ge 0$ in the theoretical derivation from \eqref{ssp9} to \eqref{ssp10}. This stabilization parameter $\gamma$ is independent of the mesh and time step sizes, and only depends on the material parameters, which is similar to the stabilization parameter in \cite{MR3777103}. The numerical tests in Section \ref{eg2} show that in the practical computation, our proposed method is not sensitive to small values of $\gamma$. Then the theoretical relationship given for $\gamma$ can be further relaxed.
\end{remark}

\subsection{A priori error estimate}\label{ee}
In this subsection, we first state several inequalities which will be used. Then, we present the error equations and establish optimal-order convergence bounds in both spatial and temporal discretizations.

Let
\begin{align*}
&{\bm\rho}_u(t)={\mathbf u}(t)-R_u{\mathbf u}(t), {\bm\theta}_u(t)=R_u{\mathbf u}(t)-{\mathbf u}_h(t)],\\
&\rho_p(t)=p(t)-R_pp(t),\theta_p(t)=R_pp(t)-p_h(t),\\
&\rho_T(t)=T(t)-R_TT(t),\theta_T(t)=R_TT(t)-T_h(t).
\end{align*}

The following lemmas concerning the elliptic projections' approximation capabilities are stated.
\begin{lemma}\label{lem3.7}\cite{MR4432106} For ${\mathbf u}\in (H^{k_1+1}(\Omega))^d$, $p\in H^{k_2+1}(\Omega)$, $T\in H^{k_3+1}(\Omega)$, the following estimates hold
\begin{align*}
\|{\bm\rho}_u\|+h||\bm\epsilon({\bm\rho}_u)||&\leq Ch^{k_1+1}\|{\mathbf u}\|_{k_1+1},\\
\|{\rho}_p\|+h||\nabla{\rho}_p||&\leq Ch^{k_2+1}\|p\|_{k_2+1},\\
\|{\rho}_T\|+h||\nabla{\rho}_T||&\leq Ch^{k_3+1}\|p\|_{k_3+1}.
\end{align*}
\end{lemma}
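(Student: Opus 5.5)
The three bounds are proved independently, each for a projection defined by a coercive bilinear form on $H_0^1$. In every case I would use C\'ea/Galerkin orthogonality for the energy-norm bound and then an Aubin--Nitsche duality argument for the $L^2$ bound. The duality step uses the convexity of $\Omega$, which gives $H^2$ elliptic regularity.

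For $R_u$, definition \eqref{ea1} gives orthogonality of $\bm\rho_u$ in the $a(\cdot,\cdot)$ inner product. By Korn's inequality (Lemma \ref{lem2.4}) the form $a$ is coercive and bounded on $\mathbf V$. Hence $\|\bm\epsilon(\bm\rho_u)\|\le C\inf_{\mathbf v_h\in\mathbf V_h}\|\mathbf u-\mathbf v_h\|_1\le Ch^{k_1}\|\mathbf u\|_{k_1+1}$, where the last step uses the standard Lagrange interpolant. For the $L^2$ bound, let $\bm\psi\in\mathbf V$ solve $a(\mathbf v,\bm\psi)=(\bm\rho_u,\mathbf v)$ for all $\mathbf v$, with $\|\bm\psi\|_2\le C\|\bm\rho_u\|$ by Lam\'e regularity on convex domains. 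Then
\[
\|\bm\rho_u\|^2=a(\bm\rho_u,\bm\psi-I_h\bm\psi)\le C\|\bm\rho_u\|_1\,h\|\bm\psi\|_2 ,
\]
which gives the extra power of $h$. The argument for $R_p$ is identical with $c_p(\cdot,\cdot)$. Here coercivity comes from $k_m$ together with Lemma \ref{lem2.3}, and $H^2$ regularity holds for $-\nabla\cdot(\mathbf K\nabla\cdot)$ because $\mathbf K$ is smooth enough by Assumption \ref{js}.

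For $R_T$, the form in \eqref{ea3}, $B(w,s)=c_T(w,s)-(\mathbf K\nabla p\cdot\nabla w,s)+\delta(w,s)$, is nonsymmetric. Coercivity on $H_0^1$ follows from
\[
|(\mathbf K\nabla p\cdot\nabla w,w)|\le\|\mathbf K\nabla p\|_{0,\infty}\|\nabla w\|\|w\|\le \frac{\theta_m}{2}\|\nabla w\|^2+C\|w\|^2 ,
\]
so it suffices to take $\delta\ge C$. Boundedness follows from $\|\mathbf K\nabla p\|_{0,\infty}\le C$, which is supplied by Assumption \ref{js}(6) through $W^{1,l}\hookrightarrow L^\infty$ for $l>d$. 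C\'ea's lemma then gives $\|\nabla\rho_T\|\le Ch^{k_3}\|T\|_{k_3+1}$. For duality I would use the adjoint problem: find $\phi\in W$ with $B(w,\phi)=(\rho_T,w)$ for all $w$. Its strong form is
\[
-\nabla\cdot(\bm\Theta\nabla\phi)+\nabla\cdot(\phi\,\mathbf K\nabla p)+\delta\phi=\rho_T .
\]

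The main obstacle is $H^2$ regularity for this adjoint, i.e.\ $\|\phi\|_2\le C\|\rho_T\|$. Expanding the convection term gives $\mathbf K\nabla p\cdot\nabla\phi+\phi\,\nabla\cdot(\mathbf K\nabla p)$. This is a lower-order perturbation with coefficients in $L^\infty$ and $L^l$. The bound on $\nabla\cdot(\mathbf K\nabla p)$ comes from $\|p\|_{W^{2,l}}$, again by Assumption \ref{js}(6). On a convex domain one moves these terms to the right-hand side and combines the $H_0^1$ stability of $\phi$ (from coercivity) with H\"older and Sobolev embedding to conclude $\phi\in H^2$. With that regularity,
\[
\|\rho_T\|^2=B(\rho_T,\phi-I_h\phi)\le C\|\rho_T\|_1\,h\|\phi\|_2 ,
\]
which yields $\|\rho_T\|\le Ch^{k_3+1}\|T\|_{k_3+1}$. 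The right-hand side of the paper's third estimate contains a typo and should read $\|T\|_{k_3+1}$ rather than $\|p\|_{k_3+1}$.
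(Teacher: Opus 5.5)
The paper gives no argument of its own for this lemma; it only cites \cite{MR4432106}. Your route is the standard proof behind that citation: C\'ea's lemma for the energy-norm bounds, Aubin--Nitsche duality for the $L^2$ bounds, and an adjoint problem to handle the nonsymmetric convective form defining $R_T$. Several of your details are right: the choice of $\delta$ for coercivity, boundedness via $\nabla p\in W^{1,l}\hookrightarrow L^\infty$, the strong form of the adjoint, the H\"older/Sobolev bookkeeping for $\phi\,\nabla\cdot(\mathbf K\nabla p)$ with $l>d$, and your observation that $\|p\|_{k_3+1}$ should read $\|T\|_{k_3+1}$.

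One justification does not hold as written. Assumption \ref{js}(1) gives only symmetry, uniform ellipticity and $L^\infty$ bounds for $\mathbf K$ and $\bm\Theta$; it says nothing about their spatial smoothness. Your duality steps need that smoothness in three places:
\begin{enumerate}
\item $H^2$ regularity of $-\nabla\cdot(\mathbf K\nabla\,\cdot)$, used for $R_p$.
\item $H^2$ regularity of the principal part $-\nabla\cdot(\bm\Theta\nabla\,\cdot)$ in the adjoint problem for $R_T$, which you use without comment.
\item The claim $\nabla\cdot(\mathbf K\nabla p)\in L^l$. Since $\nabla\cdot(\mathbf K\nabla p)=\mathbf K:D^2p+(\nabla\cdot\mathbf K)\cdot\nabla p$, this needs differentiable $\mathbf K$, not merely $p\in W^{2,l}$.
\end{enumerate}
On a convex domain, the $H^2$ regularity theorem you want requires Lipschitz coefficients. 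With only bounded measurable coefficients the dual solutions need not lie in $H^2$, and the Aubin--Nitsche step no longer gains a full power of $h$.

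So this smoothness cannot be derived from the paper's stated assumptions. State it as an extra hypothesis, e.g.\ $\mathbf K,\bm\Theta\in W^{1,\infty}(\Omega)^{d\times d}$. This holds trivially for the constant tensors used in Section \ref{NE}, and it is what the cited source implicitly relies on. With that hypothesis made explicit, your proof is complete.
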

\begin{lemma}\label{lem3.8}\cite{MR3777103}The following inequalities are true for ${\mathbf u}\in (H_0^{1}(\Omega))^d$, $p\in H_0^{1}(\Omega)$, and $T\in H_0^{1}(\Omega)$,
\begin{align*}
\|{R}_u{\mathbf u}\|\leq C\|{\mathbf u}\|_{1},\quad \|R_pp\|\leq C\|p\|_{1},\quad \|R_TT\|\leq C\|T\|_{1}.
\end{align*}
\end{lemma}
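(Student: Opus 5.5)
The statement to prove is Lemma \ref{lem3.8}: $L^2$-stability of the three projections with respect to the $H^1$ norm. For each projection I would compare it with the exact function, using the triangle inequality, and then bound the difference through the coercivity of the bilinear form that defines that projection.

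\textbf{Displacement.} Start from $\|R_u\mathbf u\|\le\|\mathbf u\|+\|\mathbf u-R_u\mathbf u\|$. The first term is bounded by $\|\mathbf u\|_1$. For the second, note that \eqref{ea1} makes $R_u$ the $a(\cdot,\cdot)$-orthogonal projection onto $\mathbf V_h$. Testing with $\mathbf v_h=R_u\mathbf u$ gives
\[
a(R_u\mathbf u,R_u\mathbf u)=a(\mathbf u,R_u\mathbf u)\le a(\mathbf u,\mathbf u)^{1/2}a(R_u\mathbf u,R_u\mathbf u)^{1/2}.
\]
Hence $2\mu\|\bm\epsilon(R_u\mathbf u)\|^2\le a(\mathbf u,\mathbf u)\le C\|\mathbf u\|_1^2$. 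Korn's inequality (Lemma \ref{lem2.4}) then gives $\|R_u\mathbf u\|\le\|R_u\mathbf u\|_1\le C\|\mathbf u\|_1$. This argument is direct and needs no duality.

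\textbf{Pressure.} The same Cauchy--Schwarz argument in the $c_p$ energy norm applies. Using the bounds $k_m\|\nabla\cdot\|^2\le c_p(\cdot,\cdot)\le k_M\|\nabla\cdot\|^2$ from Assumption \ref{js}(1), it gives $\|\nabla R_pp\|\le (k_M/k_m)^{1/2}\|\nabla p\|$. Poincar\'e's inequality (Lemma \ref{lem2.3}) then yields $\|R_pp\|\le C\|p\|_1$.

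\textbf{Temperature.} This is the one step requiring care, because the form
\[
B(T,s)=c_T(T,s)-(\mathbf K\nabla p\cdot\nabla T,s)+\delta(T,s)
\]
in \eqref{ea3} is nonsymmetric. I would first establish coercivity: since $\|\mathbf K\nabla p\|_{0,\infty}\le C$ by Assumption \ref{js}(6), Young's inequality gives $|(\mathbf K\nabla p\cdot\nabla s,s)|\le\frac{\theta_m}{2}\|\nabla s\|^2+C\|s\|^2$. Choosing $\delta$ large then gives $B(s,s)\ge c(\|\nabla s\|^2+\|s\|^2)$. Continuity, $|B(T,s)|\le C\|T\|_1\|s\|_1$, follows from the same $L^\infty$ bound. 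Taking $s_h=R_TT$ in $B(R_TT,s_h)=B(T,s_h)$ gives
\[
c\|R_TT\|_1^2\le B(R_TT,R_TT)=B(T,R_TT)\le C\|T\|_1\|R_TT\|_1,
\]
so $\|R_TT\|\le\|R_TT\|_1\le C\|T\|_1$, with $C$ depending on the regularity bound for $\nabla p$.

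The main obstacle is making this temperature coercivity quantitative, that is, checking that the $L^\infty$ bound on $\mathbf K\nabla p$ is uniform in $t$ so that the constant is independent of time. If one wanted a sharper $L^2$ estimate, an Aubin--Nitsche duality argument on the convex domain would be needed, but the $H^1$-stability route above already suffices for the stated bound.
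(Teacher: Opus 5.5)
Your proof is correct, and it gives more than the paper does: the paper offers no argument for this lemma and simply cites Chaabane and Rivi\`ere. Your route is the standard self-contained one.

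For $R_u$ and $R_p$, Galerkin projections with respect to the symmetric coercive forms $a(\cdot,\cdot)$ and $c_p(\cdot,\cdot)$ are contractions in the corresponding energy norms. Korn (Lemma \ref{lem2.4}) and Poincar\'e (Lemma \ref{lem2.3}), applied to the discrete functions, turn this into $H^1$ bounds and then $L^2$ bounds. For $R_T$ you use the C\'ea-type stability of the nonsymmetric form in \eqref{ea3}.

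Writing this out is worthwhile. By the paper's own description, the cited work treats the isothermal Biot system, so it does not literally cover the convective projection $R_T$. Coercivity of that form needs both $\delta$ large and a bound on $\|\mathbf K\nabla p\|_{0,\infty}$.

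The issue you flag as the main obstacle is already settled by Assumption \ref{js}(6). There $\nabla p\in L^\infty(0,t_f;W^{1,l}(\Omega))$ with $l>d$, and the embedding $W^{1,l}(\Omega)\hookrightarrow L^\infty(\Omega)$ gives $\sup_t\|\mathbf K\nabla p(t)\|_{0,\infty}\le C$. So $\delta$ and all constants can be chosen independently of $t$.

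You actually prove the stronger $H^1$-stability of each projection, e.g. $\|R_TT\|_1\le C\|T\|_1$, from which the stated $L^2$ bounds are immediate. The triangle inequality you open the displacement part with is never used and can be dropped.
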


Also, notice that
\begin{align}
\|p_t^{n+1}-\partial_\tau p^{n+1}\|^2
&\leq C\tau\int_{t_{n}}^{t_{n+1}}\|p_{tt}\|^2dt,\label{pt}\\
||p^{n+1}-p^{n}||^2&\le C\tau\int_{t_n}^{t_{n+1}}||p_t||^2dt,\label{pt0}\\
||{\mathbf u}_t^{n+1}-\partial_\tau{\mathbf u}^{n+1}||^2&\le C\tau\int_{t_n}^{t_{n+1}}||{\mathbf u}_{tt}||^2dt,\label{pt10}\\
||{\mathbf u}^{n+1}-{\mathbf u}^{n}||^2&\le C\tau\int_{t_n}^{t_{n+1}}||{\mathbf u}_{t}||^2dt,\label{pt20}
\end{align}
and the following useful formula
\begin{equation}
\sum\limits_{n=1}^{N-1}A^{n+1}(B^{n+1}-B^{n})=A^NB^N-A^1B^1-\sum\limits_{n=1}^{N-1}(A^{n+1}-A^{n})B^{n}.\label{f}
\end{equation}

The error equations can be obtained by subtracting \eqref{s1} from \eqref{b2}, \eqref{s2} from \eqref{b3} and \eqref{s3} from \eqref{b1}, along with \eqref{ea1}, \eqref{ea2} and \eqref{ea3},
\begin{align}
&c_0(\partial_\tau \theta_p^{n+1},q_h)-b_0(\partial_\tau \theta_T^n,q_h)-\alpha b(q_h,\partial_\tau{\bm \theta}_u^n)+c_p(\theta_p^{n+1},q_h)\notag\\
=&-c_0(p_t^{n+1}-\partial_\tau p^{n+1},q_h)-c_0(\partial_\tau\rho_p^{n+1},q_h)+b_0(T_t^{n+1}-T_t^n,q_h)+b_0(T_t^n-\partial_\tau T^n,q_h)\notag\\
&+b_0(\partial_\tau\rho_T^n,q_h)+\alpha b(q_h,{\mathbf u}_t^{n+1}-{\mathbf u}_t^n)+\alpha b(q_h,{\mathbf u}_t^n-\partial_\tau {\mathbf u}^n)+\alpha b(q_h,\partial_\tau\bm\rho_u^n),\label{ea4}\\
&a_0(\partial_\tau \theta_T^{n+1},s_h)-b_0(\partial_\tau \theta_p^{n+1},s_h)-\beta b(s_h,\partial_\tau{\bm \theta}_u^n)+c_T(\theta_T^{n+1},s_h)\notag\\
=&-a_0(T_t^{n+1}-\partial_\tau T^{n+1},s_h)-a_0(\partial_\tau\rho_T^{n+1},s_h)+b_0(p_t^{n+1}-\partial_\tau p^{n+1},s_h)+b_0(\partial_\tau\rho_p^{n+1},s_h)\notag\\
&+\beta b(s_h,{\mathbf u}_t^{n+1}-{\mathbf u}_t^n)+\beta b(s_h,{\mathbf u}_t^{n}-\partial_\tau {\mathbf u}^n)+\beta b(s_h,\partial_\tau\bm\rho_u^n)+\delta(\rho_T^{n+1},s_h)\notag\\
&+(\mathcal M({\mathbf K}\nabla p_h^{n})\cdot\nabla \theta_T^{n+1},s_h)+(({\mathbf K}\nabla p^{n+1}-\mathcal M({\mathbf K}\nabla p_h^{n}))\cdot\nabla R_TT^{n+1},s_h),\label{ea5}\\
&a({\bm \theta}_u^{n+1},{\mathbf v_h} )+\alpha b(\theta_p^{n+1},{\mathbf v_h})+\beta b(\theta_T^{n+1},{\mathbf v_h})+\gamma(\partial_\tau{\bm\theta}_u^{n+1}-\partial_\tau{\bm\theta}_u^{n},{\mathbf v_h})\notag\\
=&-\alpha b(\rho_p^{n+1},{\mathbf v_h})-\beta b(\rho_T^{n+1},{\mathbf v_h})+\gamma(\partial_\tau R_u{\mathbf u}^{n+1}-\partial_\tau R_u{\mathbf u}^{n},{\mathbf v_h}).\label{ea6}
\end{align}

The optimal convergence order estimate is now prepared as follows.
\begin{theorem}\label{thm1}
Assume the exact solutions $({\mathbf u},p,T)$ of problem \eqref{b1}-\eqref{b3} belong to the space $L^{\infty}(0,t_f;(H^{k_1+1}(\Omega)\cap H_0^1(\Omega))^d)\times L^{\infty}(0,t_f;H^{k_2+1}(\Omega)\cap H_0^1(\Omega))\times L^{\infty}(0,t_f;H^{k_3+1}(\Omega)\cap H_0^1(\Omega))$. Let $({\mathbf u}_h^n,p_h^n,T_h^n)\in {\mathbf V}_h\times Q_h\times S_h$ be the corresponding discrete solutions generated by the sequential scheme \eqref{s1}-\eqref{s3}, where $k_1$, $k_2$, $k_3\ge1$. Assuming that $\gamma=2(\frac{\alpha^2}{k_m}+\frac{\beta^2}{\theta_m})$, for $n\geq1$, we obtain the following estimate
\begin{align*}
||\mathbf u^n-\mathbf u_h^n||_1+||p^{n}-p_h^n||+||T^{n}-T_h^n||
\le C(\tau+h^{k_1}+h^{k_2+1}+h^{k_3+1}).
\end{align*}
\end{theorem}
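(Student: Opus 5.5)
The plan is to split each error into the projection error and the discrete error, $\mathbf u^n-\mathbf u_h^n={\bm\rho}_u^n+{\bm\theta}_u^n$, and similarly for $p$ and $T$. Lemma \ref{lem3.7} already gives $\|\bm\rho_u^n\|_1\le Ch^{k_1}$, $\|\rho_p^n\|\le Ch^{k_2+1}$ and $\|\rho_T^n\|\le Ch^{k_3+1}$. Hence it suffices to show that
\begin{align*}
\|\bm\epsilon({\bm\theta}_u^N)\|+\|\theta_p^N\|+\|\theta_T^N\|\le C(\tau+h^{k_1}+h^{k_2+1}+h^{k_3+1}),
\end{align*}
and then apply Korn's inequality (Lemma \ref{lem2.4}) and the triangle inequality.

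\textbf{Energy identity.} The estimate of the $\theta$-terms mimics the proof of Theorem \ref{thm0}, applied to the error equations \eqref{ea4}--\eqref{ea6}. I take $q_h=\tau\theta_p^{n+1}$ in \eqref{ea4}, $s_h=\tau\theta_T^{n+1}$ in \eqref{ea5}, and $\mathbf v_h={\bm\theta}_u^{n+1}-{\bm\theta}_u^n$ in \eqref{ea6}, and add. The left-hand side then has exactly the structure of \eqref{ssp3}: the discrete energy of $(\bm\theta_u,\theta_p,\theta_T)$, the dissipation terms $\tau c_p(\theta_p^{n+1},\theta_p^{n+1})+\tau c_T(\theta_T^{n+1},\theta_T^{n+1})$, the $\gamma$-stabilization increment, and the lagged cross terms. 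The lagged coupling $-\alpha b(\theta_p^{n+1},{\bm\theta}_u^{n+1}-2{\bm\theta}_u^n+{\bm\theta}_u^{n-1})$ (and its $\beta$ analogue) is absorbed exactly as in \eqref{ssp4}--\eqref{ssp41}, using $\gamma=2(\alpha^2/k_m+\beta^2/\theta_m)$. The lagged $b_0$ terms are handled with \eqref{ssp8}, a telescoping sum, and $c_0-2b_0>0$, $a_0-2b_0\ge0$. The term $(\mathcal M(\mathbf K\nabla p_h^n)\cdot\nabla\theta_T^{n+1},\theta_T^{n+1})$ is bounded by $MC_p\|\nabla\theta_T^{n+1}\|^2$ as in \eqref{ssp7}; alternatively one uses $M\|\nabla\theta_T^{n+1}\|\|\theta_T^{n+1}\|$ and Young's inequality, which puts $\|\theta_T^{n+1}\|^2$ into a Gronwall term.

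\textbf{Truncation and projection terms.} The remaining right-hand side terms fall into three groups.
\begin{itemize}
\item The time-truncation terms $p_t^{n+1}-\partial_\tau p^{n+1}$, $T_t^n-\partial_\tau T^n$ and $\mathbf u_t^n-\partial_\tau\mathbf u^n$ are bounded by \eqref{pt} and \eqref{pt10}. The extrapolation terms $T_t^{n+1}-T_t^n$ and $\mathbf u_t^{n+1}-\mathbf u_t^n$ are $O(\tau)$ by the regularity in Assumption \ref{js}(6). Each such term is paired with $\tau\theta_p^{n+1}$ or $\tau\theta_T^{n+1}$. After integrating $b(\cdot,\cdot)$ by parts onto $\nabla\theta$ and using the Poincar\'e inequality, they contribute $C\tau^2$ plus absorbable gradient terms.
\item The projection terms $\partial_\tau\rho_p$, $\partial_\tau\rho_T$ and $\partial_\tau\bm\rho_u$ are bounded by $h^{k+1}\|\cdot_t\|$ via Lemma \ref{lem3.7}, which is $L^2$-optimal. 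The $\partial_\tau\bm\rho_u$ term only yields $h^{k_1}$ in $\|\cdot\|$ when paired through $b(q_h,\cdot)$, which matches the claimed $h^{k_1}$. The terms $\alpha b(\rho_p^{n+1},\cdot)$ and $\beta b(\rho_T^{n+1},\cdot)$ in \eqref{ea6} are tested against ${\bm\theta}_u^{n+1}-{\bm\theta}_u^n$ and treated with the summation-by-parts formula \eqref{f}. This moves the difference onto $\rho^{n+1}-\rho^n=O(\tau h^{k+1})$ and leaves boundary terms that are absorbed into $\mu\|\bm\epsilon({\bm\theta}_u^N)\|^2$.
\item The $\gamma$ term $\gamma(\partial_\tau R_u\mathbf u^{n+1}-\partial_\tau R_u\mathbf u^n,\cdot)$ is $O(\tau)$, using $\mathbf u_{tt}$ regularity and Lemma \ref{lem3.8}.
\end{itemize}

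\textbf{Main obstacle: the convective consistency term.} I expect the hardest step to be the nonlinear term
\begin{align*}
((\mathbf K\nabla p^{n+1}-\mathcal M(\mathbf K\nabla p_h^n))\cdot\nabla R_TT^{n+1},\theta_T^{n+1}).
\end{align*}
I split it as
\begin{align*}
\mathbf K\nabla(p^{n+1}-p^n)+\big[\mathcal M(\mathbf K\nabla p^n)-\mathcal M(\mathbf K\nabla p_h^n)\big],
\end{align*}
using $\mathcal M(\mathbf K\nabla p^n)=\mathbf K\nabla p^n$. The first piece is $O(\tau)$. For the second, I bound $\|\nabla R_TT\|_{0,\infty}$ by a constant using the $W^{1,l}$ regularity and the stability of $R_T$, and use the Lipschitz property of Lemma \ref{lem0} in $L^2$ form. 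This gives $C\|\nabla(\rho_p^n+\theta_p^n)\|\,\|\theta_T^{n+1}\|$. The part involving $\|\nabla\theta_p^n\|$ is absorbed by $\frac{k_m}{4}\tau\|\nabla\theta_p^n\|^2$ from the previous step's dissipation, which works because the sum over $n$ contains it. The remaining $\|\theta_T^{n+1}\|^2$ goes to Gronwall. The part involving $\|\nabla\rho_p^n\|=O(h^{k_2})$ would be suboptimal. To recover $h^{k_2+1}$, I would instead integrate by parts, writing
\begin{align*}
(\mathbf K\nabla\rho_p^n\cdot\nabla R_TT^{n+1},\theta_T^{n+1})=-(\rho_p^n,\nabla\cdot(\mathbf K\nabla R_TT^{n+1}\,\theta_T^{n+1})),
\end{align*}
on the linearized part where $\mathcal M$ acts as the identity. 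This requires splitting off the region where the cutoff is inactive, or assuming $M$ is large enough that $\mathcal M$ is the identity along the discrete solution via an inductive $L^\infty$ bound on $\nabla p_h$. That bound in turn needs an inverse estimate and a mild $\tau$--$h$ coupling.

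\textbf{Conclusion.} After summing over $1\le n\le N-1$, the starting error at $n=1$ is of optimal order by the coupled scheme \eqref{b7}--\eqref{b9}. The discrete Gronwall inequality then gives the bound on the $\theta$-terms, and the triangle inequality completes the estimate.
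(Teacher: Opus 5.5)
Your energy framework matches the paper's: the same test functions, the lagged $\alpha,\beta$ coupling absorbed by $\gamma$, summation by parts for the $\rho$-terms in \eqref{ea6}, Gronwall, and starting values from the coupled scheme. The step you flag as the main obstacle, however, is not closed.

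By inserting the exact $p^n$ inside the cut-off, the Lipschitz bound of Lemma \ref{lem0} must see $\nabla(\rho_p^n+\theta_p^n)$. The resulting $O(h^{k_2})$ contribution is not covered by the claimed rate when $k_2<\min\{k_1,k_3+1\}$, e.g. $k_1=2$, $k_2=k_3=1$. Neither remedy you suggest proves the theorem as stated:
\begin{itemize}
\item Integrating by parts only on the solution-dependent set where $\mathcal M$ is inactive leaves boundary terms on that set that you cannot control.
\item The inductive $L^\infty$ bound on $\nabla p_h^n$ adds mesh quasi-uniformity and a $\tau$--$h$ restriction that the theorem does not assume. Moreover, for $k_2=1$ the inverse factor $h^{-1-d/2}$ is not compensated by an $O(\tau+h^2)$ bound on $\|\theta_p^n\|$.
\item The rewriting $-(\rho_p^n,\nabla\cdot(\mathbf K\nabla R_TT^{n+1}\theta_T^{n+1}))$ is not valid, since $\nabla R_TT^{n+1}$ jumps across element faces.
\end{itemize}
The missing idea is to compare with $R_pp^n$ rather than $p^n$:
\begin{align*}
\mathbf K\nabla p^{n+1}-\mathcal M(\mathbf K\nabla p_h^n)=\mathbf K\nabla\rho_p^{n+1}+\mathbf K\nabla(R_pp^{n+1}-R_pp^n)+\big[\mathcal M(\mathbf K\nabla R_pp^n)-\mathcal M(\mathbf K\nabla p_h^n)\big].
\end{align*}
This only needs $M\ge\|\mathbf K\nabla R_pp^n\|_{0,\infty}$, which is a bound on the projected exact solution, not on $p_h$. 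The bracket is then bounded by $\|\nabla R_TT^{n+1}\|_{0,\infty}\|\mathbf K\nabla\theta_p^n\|\,\|\theta_T^{n+1}\|$, which is absorbed by the previous step's dissipation plus Gronwall. The $\rho_p$ term carries no cut-off. Write $\nabla R_TT^{n+1}=\nabla T^{n+1}-\nabla\rho_T^{n+1}$ and integrate by parts only against the smooth $\nabla T^{n+1}$, which gives $O(\|\rho_p^{n+1}\|)=O(h^{k_2+1})$. Then bound $(\mathbf K\nabla\rho_p^{n+1}\cdot\nabla\rho_T^{n+1},\theta_T^{n+1})$ using $\|\nabla\rho_T^{n+1}\|_{0,\infty}\le Ch^{k_3}$. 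This is exactly the paper's $H_1$--$H_3$ splitting, and it needs no $L^\infty$ control of $p_h$ and no $\tau$--$h$ coupling.

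Second, your treatment of the $\gamma$ consistency term is insufficient. A pointwise $O(\tau)$ bound on $\partial_\tau R_u\mathbf u^{n+1}-\partial_\tau R_u\mathbf u^n$, paired with $\bm\theta_u^{n+1}-\bm\theta_u^n$ and summed over $t_f/\tau$ steps, gives only $C\sum_n\tau^2=C\tau$ in the squared energy, i.e. an $O(\tau^{1/2})$ error. This term has the same structure as the $\rho$-terms in \eqref{ea6}, so it must also be summed by parts with \eqref{f}. That places one more time difference on it, and you then need third time derivatives of $\mathbf u$, not just $\mathbf u_{tt}$ regularity. This is what the paper's $J_1$--$J_5$ do, using $\max_t\|R_u\mathbf u_{tt}\|$ and $\int\|R_u\mathbf u_{ttt}\|^2dt$.
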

\begin{proof}
Selecting ${\mathbf v}_h=\partial_\tau{\bm\theta}_u^{n+1}$ in \eqref{ea4}, $q_h=\theta_p^{n+1}$ in \eqref{ea5} and $s_h=\theta_T^{n+1}$ in \eqref{ea6}, summing these equations results in
\begin{align}
&c_0(\partial_\tau \theta_p^{n+1},\theta_p^{n+1})+a_0(\partial_\tau \theta_T^{n+1}, \theta_T^{n+1})+a({\bm\theta}_u^{n+1},{\partial_\tau\bm\theta}_u^{n+1})\notag\\
&+c_p(\theta_p^{n+1},\theta_p^{n+1}) +c_T(\theta_T^{n+1},\theta_T^{n+1})+\gamma({\partial_\tau\bm\theta}_u^{n+1}-{\partial_\tau\bm\theta}_u^n,{\partial_\tau\bm\theta}_u^{n+1})\notag\\
=  &b_0(\partial_\tau \theta_T^{n}, \theta_p^{n+1})+b_0(\partial_\tau\theta_p^{n+1}, \theta_T^{n+1})-c_0(p_t^{n+1}-\partial_\tau p^{n+1}, \theta_p^{n+1})-c_0(\partial_\tau\rho_p^{n+1},\theta_p^{n+1})\notag\\
&+b_0(T_t^{n+1}-T_t^n, \theta_p^{n+1})+b_0(T_t^{n}-\partial_\tau T^{n}, \theta_p^{n+1})+b_0(\partial_\tau\rho_T^n,\theta_p^{n+1})-a_0(T_t^{n+1}-\partial_\tau T^{n+1},\theta_T^{n+1})\notag\\
&-a_0(\partial_\tau\rho_T^{n+1},\theta_T^{n+1})+b_0(p_t^{n+1}-\partial_\tau p^{n+1},\theta_T^{n+1})+b_0(\partial_\tau \rho_p^{n+1},\theta_T^{n+1})\notag\\
&+\alpha b(\theta_p^{n+1},{\mathbf u}_t^{n+1}-{\mathbf u}_t^n)+\alpha b(\theta_p^{n+1},{\mathbf u}_t^{n}-\partial_\tau {\mathbf u}^{n})+\alpha b(\theta_p^{n+1},\partial_\tau\bm\rho_u^{n})\notag\\
&+\beta b(\theta_T^{n+1},{\mathbf u}_t^{n+1}-{\mathbf u}_t^n)+\beta b(\theta_T^{n+1},{\mathbf u}_t^n-\partial_\tau {\mathbf u}^n)+\beta b(\theta_T^{n+1},\partial_\tau\bm\rho_u^n)\notag\\
&+(\mathcal M({\mathbf K}\nabla p_h^{n})\cdot\nabla \theta_T^{n+1},\theta_T^{n+1})+(({\mathbf K}\nabla p^{n+1}-\mathcal M({\mathbf K}\nabla p_h^{n}))\cdot\nabla R_TT^{n+1},\theta_T^{n+1})\notag\\
&-\alpha(b(\theta_p^{n+1},\partial_\tau\bm\theta_u^{n+1})-b(\theta_p^{n+1},\partial_\tau\bm\theta_u^n))-\beta(b(\theta_T^{n+1},\partial_\tau\bm\theta_u^{n+1})-b(\theta_T^{n+1},\partial_\tau\bm\theta_u^n))\notag\\
&-\alpha b(\rho_p^{n+1},\partial_\tau\bm\theta_u^{n+1})-\beta b(\rho_T^{n+1},\partial_\tau\bm\theta_u^{n+1})+\gamma(\partial_\tau R_u{\mathbf u}^{n+1}-\partial_\tau R_u{\mathbf u}^{n},\partial_\tau\bm\theta_u^{n+1})+\delta(\rho_T^{n+1},\theta_T^{n+1}).\label{ea7}
\end{align}
Using Lemma \ref{lem2.4} and the symmetry of $a(\cdot,\cdot)$, we obtain
\begin{align*}
&c_0(\partial_\tau \theta_p^{n+1},\theta_p^{n+1})=\frac{c_0}{2\tau}(||\theta_p^{n+1}||^2-||\theta_p^{n}||^2)+\frac{c_0}{2\tau}||\theta_p^{n+1}-\theta_p^{n}||^2,\\
& a_0(\partial_\tau \theta_T^{n+1}, \theta_T^{n+1})=\frac{a_0}{2\tau}(||\theta_T^{n+1}||^2-||\theta_T^{n}||^2)+\frac{a_0}{2\tau}||\theta_T^{n+1}-\theta_T^{n}||^2,\\
& a({\bm\theta}_u^{n+1},{\partial_\tau\bm\theta}_u^{n+1})=\frac{1}{2\tau}( a({\bm\theta}_u^{n+1},{\bm\theta}_u^{n+1})-a({\bm\theta}_u^{n},{\bm\theta}_u^{n}))+\frac{1}{2\tau} a({\bm\theta}_u^{n+1}-{\bm\theta}_u^{n},{\bm\theta}_u^{n+1}-{\bm\theta}_u^{n}),\\
&c_p(\theta_p^{n+1},\theta_p^{n+1})\ge k_m||\nabla\theta_p^{n+1} ||^2, c_T(\theta_T^{n+1},\theta_T^{n+1})\ge \theta_m||\theta_T^{n+1} ||^2,\\
&\gamma({\partial_\tau\bm\theta}_u^{n+1},{\partial_\tau\bm\theta}_u^{n+1})-\gamma({\partial_\tau\bm\theta}_u^n,{\partial_\tau\bm\theta}_u^{n+1}) =\frac{\gamma}{2}(||{\partial_\tau\bm\theta}_u^{n+1}||^2-||{\partial_\tau\bm\theta}_u^{n}||^2)+\frac{\gamma}{2}||{\partial_\tau\bm\theta}_u^{n+1}-{\partial_\tau\bm\theta}_u^{n}||^2.
\end{align*}
Also, notice that
\begin{align*}
&b_0(\partial_\tau \theta_T^n,\theta_p^{n+1})+b_0(\partial_\tau\theta_p^{n+1},\theta_T^{n+1})\\
=&b_0\tau(\partial_\tau \theta_T^{n+1},\partial_\tau\theta_p^{n+1})+b_0\partial_\tau(\theta_p^{n+1},\theta_T^{n+1})+b_0(\partial_\tau \theta_T^{n}-\partial_\tau \theta_T^{n+1},\theta_p^{n+1})\\
\le&\frac{b_0}{2}\tau||\partial_\tau \theta_T^{n+1}||^2+\frac{b_0}{2}\tau||\partial_\tau \theta_p^{n+1}||^2+b_0\partial_\tau(\theta_p^{n+1},\theta_T^{n+1})+b_0(\partial_\tau \theta_T^{n}-\partial_\tau \theta_T^{n+1},\theta_p^{n+1}).
\end{align*}
Multiplying the both sides of \eqref{ea7} by $2\tau$, summing up $n$ from 1 to $N-1$ and utilizing the equations shown above, we provide
\begin{align}\label{z11}
&c_0(||\theta_p^{N}||^2-||\theta_p^{1}||^2)+(c_0-b_0)\tau^2\sum\limits_{n=1}^{N-1}||\partial_\tau\theta_p^{n+1}||^2+a_0(||\theta_T^{N}||^2-||\theta_T^{1}||^2)\notag\\
&+(a_0-b_0)\tau^2\sum\limits_{n=1}^{N-1}||\partial_\tau\theta_T^{n+1}||^2+a({\bm\theta}_u^{N},{\bm\theta}_u^{N})-a({\bm\theta}_u^{1},{\bm\theta}_u^{1})+\tau^2\sum\limits_{n=1}^{N-1}a(\partial_\tau{\bm\theta}_u^{n+1},\partial_\tau{\bm\theta}_u^{n+1}) \notag\\
&+2\tau\sum\limits_{n=1}^{N-1}(k_m||\nabla\theta_p^{n+1} ||^2+\theta_m||\nabla\theta_T^{n+1}||^2)+\gamma\tau(||{\partial_\tau\bm\theta}_u^{N}||^2-||{\partial_\tau\bm\theta}_u^{1}||^2)+\gamma\tau\sum\limits_{n=1}^{N-1}||{\partial_\tau\bm\theta}_u^{n+1}-{\partial_\tau\bm\theta}_u^{n}||^2\notag\\
\le&I_1+\cdots+I_{16},
\end{align}
where
\begin{align*}
I_1=&2b_0((\theta_p^{N}, \theta_T^{N})-(\theta_p^{1}, \theta_T^{1})),\\
I_2=&2b_0\tau\sum\limits_{n=1}^{N-1}(\partial_\tau\theta_T^{n}-\partial_\tau\theta_T^{n+1}, \theta_p^{n+1}),\\
I_3=&-2c_0\tau\sum\limits_{n=1}^{N-1}[(p_t^{n+1}-\partial_\tau p^{n+1}, \theta_p^{n+1})+(\partial_\tau\rho_p^{n+1},\theta_p^{n+1})],\\
I_4= &2b_0\tau\sum\limits_{n=1}^{N-1}[(T_t^{n+1}-T_t^n, \theta_p^{n+1})+(T_t^{n}-\partial_\tau T^{n}, \theta_p^{n+1})+(\partial_\tau\rho_T^n,\theta_p^{n+1})],\\
I_5= &-2a_0\tau\sum\limits_{n=1}^{N-1}[(T_t^{n+1}-\partial_\tau T^{n+1},\theta_T^{n+1})+(\partial_\tau\rho_T^{n+1},\theta_T^{n+1})],\\
I_6= &2b_0\tau\sum\limits_{n=1}^{N-1}[(p_t^{n+1}-\partial_\tau p^{n+1},\theta_T^{n+1})+(\partial_\tau \rho_p^{n+1},\theta_T^{n+1})],\\
I_7= &2\alpha\tau\sum\limits_{n=1}^{N-1}[ b(\theta_p^{n+1},{\mathbf u}_t^{n+1}-{\mathbf u}_t^n)+ b(\theta_p^{n+1},{\mathbf u}_t^{n}-\partial_\tau {\mathbf u}^{n})+ b(\theta_p^{n+1},\partial_\tau\bm\rho_u^{n})],\\
I_8= & 2\beta\tau\sum\limits_{n=1}^{N-1}[ b(\theta_T^{n+1},{\mathbf u}_t^{n+1}-{\mathbf u}_t^n)+ b(\theta_T^{n+1},{\mathbf u}_t^n-\partial_\tau {\mathbf u}^n)+ b(\theta_T^{n+1},\partial_\tau\bm\rho_u^n)],\\
I_9= &2\tau\sum\limits_{n=1}^{N-1}(\mathcal M({\mathbf K}\nabla p_h^{n})\cdot\nabla \theta_T^{n+1},\theta_T^{n+1}),\\
I_{10}=&2\tau\sum\limits_{n=1}^{N-1}(({\mathbf K}\nabla p^{n+1}-\mathcal M({\mathbf K}\nabla p_h^{n}))\cdot\nabla R_TT^{n+1},\theta_T^{n+1}),\\
 I_{11}= &-2\alpha\tau\sum\limits_{n=1}^{N-1}(b(\theta_p^{n+1},\partial_\tau\bm\theta_u^{n+1})-b(\theta_p^{n+1},\partial_\tau\bm\theta_u^n)),\\
I_{12}= &-2\beta\tau\sum\limits_{n=1}^{N-1}(b(\theta_T^{n+1},\partial_\tau\bm\theta_u^{n+1})-b(\theta_T^{n+1},\partial_\tau\bm\theta_u^n)),\\
I_{13}= &-2\alpha\tau\sum\limits_{n=1}^{N-1} b(\rho_p^{n+1},\partial_\tau\bm\theta_u^{n+1}),\\
I_{14}= &-2\beta\tau\sum\limits_{n=1}^{N-1}b(\rho_T^{n+1},\partial_\tau\bm\theta_u^{n+1}),\\
I_{15}= &2\gamma\tau\sum\limits_{n=1}^{N-1}(\partial_\tau R_u{\mathbf u}^{n+1}-\partial_\tau R_u{\mathbf u}^{n},\partial_\tau\bm\theta_u^{n+1}),\\
I_{16}=&2\delta\tau\sum\limits_{n=1}^{N-1}(\rho_T^{n+1},\theta_T^{n+1}).
\end{align*}

The next step is to determine upper bounds for the terms $I_1$-$I_{15}$ on the right-hand side of \eqref{z11}. Using the Young's inequality and Cauchy-Schwarz inequality, we get
\begin{align*}
|I_1|\le b_0||\theta_p^{N}||^2+b_0||\theta_T^{N}||^2+b_0||\theta_p^{1}||^2+b_0||\theta_T^{1}||^2.
\end{align*}
It follows from \eqref{f} that
\begin{align*}
|I_2|
=&2b_0\tau|(\partial_\tau \theta_T^{1},\theta_p^{1})-(\partial_\tau \theta_T^{N},\theta_p^{N})+\sum\limits_{n=1}^{N-1}\tau(\partial_\tau \theta_T^{n},\partial_\tau \theta_p^{n+1})|\\
\le&b_0\tau^2||\partial_\tau \theta_T^{1}||^2+b_0||\theta_p^{1}||^2+b_0\tau^2||\partial_\tau \theta_T^{N}||^2+b_0||\theta_p^{N}||^2\\
&+b_0\tau \sum\limits_{n=1}^{N-1}\tau||\partial_\tau \theta_T^{n}||^2+b_0\tau \sum\limits_{n=1}^{N-1}\tau||\partial_\tau \theta_p^{n+1}||^2.
\end{align*}
The term $I_3$ can be constrained by using \eqref{pt}, the Young's inequality and Cauchy-Schwartz inequality,
\begin{align*}
|I_3|
&\leq 2c_0\tau\sum\limits_{n=1}^{N-1}||\theta_p^{n+1} ||^2+c_0\tau\sum\limits_{n=1}^{N-1}\|p_t^{n+1}-\partial_\tau p^{n+1}\|^2 +c_0\tau\sum\limits_{n=1}^{N-1}||\partial_\tau\rho_p^{n+1}||^2\\
&\leq 2c_0\tau\sum\limits_{n=1}^{N-1}||\theta_p^{n+1} ||^2+C\tau^2\int_{t_1}^{t_{N}}\|p_{tt}\|^2dt +Ch^{2k_2+2}\int_{t_1}^{t_{N}}\|p_t\|_{k_2+1}^2dt.
\end{align*}
Similarly,
\begin{align*}
|I_4|
&\leq 3b_0\tau\sum\limits_{n=1}^{N-1}||\theta_p^{n+1} ||^2+b_0\tau\sum\limits_{n=1}^{N-1}\|T_t^{n+1}-T_t^{n}\|^2 +b_0\tau\sum\limits_{n=1}^{N-1}\|T_t^{n}-\partial_\tau T^{n}\|^2 +b_0\tau\sum\limits_{n=1}^{N-1}||\partial_\tau\rho_T^{n}||^2\\
&\leq 3b_0\tau\sum\limits_{n=1}^{N-1}||\theta_p^{n+1} ||^2+C\tau^2\int_{t_{0}}^{t_{N-1}}(\|T_{t}\|^2+\|T_{tt}\|^2)dt +Ch^{2k_3+2}\int_{t_{0}}^{t_{N-1}}\|T_t\|_{k_3+1}^2dt,\\
|I_5|
&\leq 2a_0\tau\sum\limits_{n=1}^{N-1}||\theta_T^{n+1} ||^2+C\tau^2\int_{t_{1}}^{t_{N}}\|T_{tt}\|^2dt +Ch^{2k_3+2}\int_{t_{1}}^{t_{N}}\|T_t\|_{k_3+1}^2dt,\\
|I_6|
&\leq 2b_0\tau\sum\limits_{n=1}^{N-1}||\theta_T^{n+1} ||^2+C\tau^2\int_{t_1}^{t_{N}}\|p_{tt}\|^2dt +Ch^{2k_2+2}\int_{t_1}^{t_{N}}\|p_t\|_{k_2+1}^2dt.
\end{align*}
For $I_7$, it follows from the Cauchy-Schwartz inequality and Young's inequality that,
\begin{align*}
&\alpha b(\theta_p^{n+1},{\mathbf u}_t^{n+1}-{\mathbf u}_t^n)
\le \frac{k_m}{24}||\nabla\theta_p^{n+1} ||^2+C||{\mathbf u}_t^{n+1}-{\mathbf u}_t^n||^2,\\
&\alpha b(\theta_p^{n+1},{\mathbf u}_t^{n}-\partial_\tau {\mathbf u}^n)
\le \frac{k_m}{24}||\nabla\theta_p^{n+1} ||^2+C||{\mathbf u}_t^{n}-\partial_\tau {\mathbf u}^n||^2,\\
&\alpha b(\theta_p^{n+1},\partial_\tau\bm\rho_u^n)
\le \frac{k_m}{24}||\nabla\theta_p^{n+1} ||^2+C||\partial_\tau\bm\rho_u^n||^2.
\end{align*}
Combined with \eqref{pt10} and \eqref{pt20}, we provide
\begin{align*}
|I_7|
&\leq \frac{k_m}{4}\tau\sum\limits_{n=1}^{N-1}||\nabla\theta_p^{n+1} ||^2+C(\tau^2\int_{t_{0}}^{t_{N}}||{\mathbf u}_{tt}||^2dt+h^{2k_1+2}\int_{t_{0}}^{t_{N-1}}\|{\mathbf u}_t\|_{k_1+1}^2dt).
\end{align*}
Similarly,
 \begin{align*}
|I_8|
&\leq \frac{\theta_m}{4}\tau\sum\limits_{n=1}^{N-1}||\nabla\theta_T^{n+1} ||^2+C\left(\tau^2\int_{t_{0}}^{t_{N}}||{\mathbf u}_{tt}||_V^2dt+h^{2k_1+2}\int_{t_{0}}^{t_{N-1}}\|{\mathbf u}_t\|_{k_1+1}^2dt\right).
\end{align*}
Applying Lemma \ref{lem0}, Young's inequality and Cauchy-Schwartz inequality, we render
\begin{align*}
|I_9|\leq \frac{\theta_m}{8}\tau\sum\limits_{n=1}^{N-1}||\nabla\theta_T^{n+1}||^2+C\tau\sum\limits_{n=1}^{N-1}\|\theta_T^{n+1}\|^2.
\end{align*}
Due to the suitably large cut-off operator constant $M$ \cite{MR2116915}, for $I_{10}$, we obtain
\begin{align*}
&(({\mathbf K}\nabla p^{n+1}-\mathcal M({\mathbf K}\nabla p_h^{n}))\cdot\nabla R_TT^{n+1},\theta_T^{n+1})\\
=&({\mathbf K}\nabla\rho_p^{n+1}\cdot\nabla R_TT^{n+1},\theta_T^{n+1})+({\mathbf K}\nabla(R_pp^{n+1}-R_pp^{n})\cdot\nabla R_TT^{n+1},\theta_T^{n+1})\\
&+((\mathcal M({\mathbf K}\nabla R_pp^{n})-\mathcal M({\mathbf K}\nabla p_h^{n}))\cdot\nabla R_TT^{n+1},\theta_T^{n+1})\\
:=&H_1+H_2+H_3.
\end{align*}
For $H_1$, we have
\begin{align*}
|H_1|\le& k_M|(\nabla T^{n+1}\nabla\rho_p^{n+1},\theta_T^{n+1})-(\nabla \rho_T^{n+1}\cdot\nabla\rho_p^{n+1},\theta_T^{n+1})|\notag\\
=&k_M|\langle\rho_p^{n+1},\nabla T^{n+1}\cdot\theta_T^{n+1}\bm n\rangle_{\partial\Omega}-(\rho_p^{n+1},\nabla T^{n+1}\cdot\nabla\theta_T^{n+1})\notag\\
&-(\rho_p^{n+1},\nabla\cdot(\nabla T^{n+1})\cdot\theta_T^{n+1})-(\nabla \rho_T^{n+1}\cdot\nabla\rho_p^{n+1},\theta_T^{n+1})|\notag\\
=&k_M|(\rho_p^{n+1},\nabla T^{n+1}\cdot\nabla\theta_T^{n+1})+(\rho_p^{n+1},\nabla\cdot(\nabla T^{n+1})\cdot\theta_T^{n+1})+(\nabla \rho_T^{n+1}\cdot\nabla\rho_p^{n+1},\theta_T^{n+1})|.
\end{align*}
Combining the Young's inequality, Cauchy-Schwartz inequality and \eqref{pt0}, we obtain
\begin{align*}
(\rho_p^{n+1},\nabla T^{n+1}\cdot\nabla\theta_T^{n+1})\le& \frac{\theta_m}{8}||\nabla\theta_T^{n+1}||^2+C||\rho_p^{n+1}||^2,\\
(\rho_p^{n+1},\nabla\cdot(\nabla T^{n+1})\cdot\theta_T^{n+1})\le& C||\theta_T^{n+1}||^2+C||\rho_p^{n+1}||^2,\\
(\nabla \rho_T^{n+1}\cdot\nabla\rho_p^{n+1},\theta_T^{n+1})
\le& C\tau\int_{t_{n}}^{t_{n+1}}||\nabla\partial_tR_hp||^2dt+C||\theta_T^{n+1}||^2.
\end{align*}
The $L^{\infty}(\Omega)$-error estimate in \cite{MR717695} provides us with
\begin{equation*}
||\nabla\rho_T^{n+1}||_{0,\infty}\le C||T||_{k_3+1,\infty}h^{k_3}.
\end{equation*}
Therefore,
\begin{align*}
(\nabla \rho_T^{n+1}\cdot\nabla\rho_p^{n+1},\theta_T^{n+1})
&\le\frac{1}{2}||\theta_T^{n+1}||^2+\frac{1}{2}||\nabla \rho_T^{n+1}||_{0,\infty}^2||\nabla\rho_p^{n+1}||^2\\
&\le C||\theta_T^{n+1}||^2+Ch^{2k_2}h^{2k_3}\\
&\le C||\theta_T^{n+1}||^2+C(h^{2k_2+2}+h^{2k_3+2}).
\end{align*}
Then, we have
\begin{align*}
|H_1|\le C(h^{2k_2+2}+h^{2k_3+2})+\frac{\theta_m}{8}||\nabla\theta_T^{n+1}||^2+C||\theta_T^{n+1}||^2.
\end{align*}
By \eqref{pt0}, we obtain
\begin{align*}
|H_2|\le C||\nabla (R_pp^{n+1}-R_pp^{n})||^2+C||\theta_T^{n+1}||^2\le C\tau\int_{t_{n}}^{t_{n+1}}||\nabla R_pp_t||^2dt+C||\theta_T^{n+1}||^2.
\end{align*}
Observing that  $|\mathcal M({\mathbf K}\nabla p^{n+1})-\mathcal M({\mathbf K}\nabla p_h^{n})|\le|{\mathbf K}\nabla p^{n+1}-{\mathbf K}\nabla p_h^{n}|$, we get
\begin{align*}
|H_3|\le||\nabla R_TT^{n+1}||_{0,\infty} ||{\mathbf K}\nabla R_p p^{n}-{\mathbf K}\nabla p_h^{n}||||\theta_T^{n+1}||\le \frac{k_m}{4}||\nabla\theta_p^{n}||^2+C||\theta_T^{n+1}||^2.
\end{align*}
By combining $H_1$-$H_3$, we have
\begin{align*}
&(({\mathbf K}\nabla p^{n+1}-\mathcal M({\mathbf K}\nabla p_h^{n}))\cdot\nabla R_TT^{n+1},\theta_T^{n+1})\\
\leq & C(h^{2k_2+2}+h^{2k_3+2})+C\tau\int_{t_{n}}^{t_{n+1}}||\nabla p_t||_1^2dt+\frac{k_m}{4}\|\nabla\theta_p^{n}\|^2+\frac{\theta_m}{8}\|\nabla\theta_T^{n+1}\|^2+C||\theta_T^{n+1}||^2.
\end{align*}
Thus,
\begin{align*}
|I_{10}|
\leq &CT(h^{2k_2+2}+h^{2k_3+2})+C\tau^2\int_{t_{1}}^{t_{N}}||\nabla p_t||_1^2dt\\
&+\frac{k_m}{4}\tau\sum\limits_{n=1}^{N-1}\|\nabla\theta_p^{n}\|^2+\frac{\theta_m}{8}\tau\sum\limits_{n=1}^{N-1}\|\nabla\theta_T^{n+1}\|^2+C\tau\sum\limits_{n=1}^{N-1}||\theta_T^{n+1}||^2.
\end{align*}
Similarly to $I_7$, estimated $I_{11}$ and $I_{12}$,
\begin{align*}
\alpha(b(\theta_p^{n+1},\partial_\tau\bm\theta_u^{n+1})-b(\theta_p^{n+1},\partial_\tau\bm\theta_u^n))
=&\alpha\sum\limits_{K\in \mathcal T_h}\int_K (\partial_\tau\bm\theta_u^{n+1}-\partial_\tau\bm\theta_u^n)\cdot\nabla\theta_p^{n+1}dK\\
\le& \frac{k_m}{2}||\nabla\theta_p^{n+1}||^2+\frac{\alpha^2}{2k_m}||\partial_\tau\bm\theta_u^{n+1}-\partial_\tau\bm\theta_u^n||^2,
\end{align*}
we obtain
\begin{align*}
|I_{11}|+|I_{12}|\le k_m\tau\sum\limits_{n=1}^{N-1}||\nabla\theta_p^{n+1}||^2+ \theta_m\tau\sum\limits_{n=1}^{N-1}||\nabla\theta_T^{n+1}||^2+(\frac{\alpha^2}{k_m}+\frac{\beta^2}{\theta_m})\tau\sum\limits_{n=1}^{N-1}||\partial_\tau\bm\theta_u^{n+1}-\partial_\tau\bm\theta_u^n||^2.
\end{align*}
From \eqref{f}, we get
\begin{align*}
I_{13}=-2\alpha b(\rho_p^{N},\bm\theta_u^{N}) + 2\alpha b(\rho_p^{1},\bm\theta_u^{1}) +2\alpha\sum\limits_{n=1}^{N-1}  b(\rho_p^{n+1}-\rho_p^{n},\bm\theta_u^{n}).
\end{align*}
Applying the Green's theorem yields that
\begin{align*}
\alpha b(\rho_p^{N},\bm\theta_u^{N})=&\alpha\sum\limits_{K\in \mathcal T_h}\int_K \rho_p^{N}\nabla\cdot \bm\theta_u^{N}dK\le \frac{\mu}{3}||\bm\epsilon(\bm\theta_u^{N})||^2+Ch^{2k_2+2},\\
\alpha b(\rho_p^{1},\bm\theta_u^{1})\le& \alpha||\bm\epsilon(\bm\theta_u^{1})||^2+Ch^{2k_2+2},\\
\alpha \sum\limits_{n=1}^{N-1} b(\rho_p^{n+1}-\rho_p^{n},\bm\theta_u^{n})
\le&\alpha \tau\sum\limits_{n=1}^{N-1} ||\bm\epsilon(\bm\theta_u^{n})||^2+\frac{\alpha C}{\tau}\sum\limits_{n=1}^{N-1}||\rho_p^{n+1}-\rho_p^{n}||^2\\
\le& C\tau\sum\limits_{n=1}^{N-1}||\bm\epsilon(\bm\theta_u^{n})||^2+Ch^{2k_2+2}.
\end{align*}
Hence,
\begin{align*}
|I_{13}|
\leq \frac{\mu}{3}||\bm\epsilon(\bm\theta_u^{N})||^2+\alpha||\bm\epsilon(\bm\theta_u^{1})||^2+ C\tau\sum\limits_{n=1}^{N-1} ||\bm\epsilon(\bm\theta_u^{n})||^2+Ch^{2k_2+2}.
\end{align*}
Similarly,
\begin{align*}
|I_{14}|
\leq \frac{\mu}{3}||\bm\epsilon(\bm\theta_u^{N})||^2+\alpha||\bm\epsilon(\bm\theta_u^{1})||^2+ C\tau\sum\limits_{n=1}^{N-1} ||\bm\epsilon(\bm\theta_u^{n})||^2+Ch^{2k_3+2}.
\end{align*}
Furthermore, utilizing \eqref{f}, we get
\begin{align*}
I_{15}=&\gamma(\frac{R_u{\mathbf u}^{2}-R_u{\mathbf u}^{1}}{\tau}-\frac{R_u{\mathbf u}^{1}-R_u{\mathbf u}^{0}}{\tau_0},\bm\theta_u^{2}-\bm\theta_u^{1})+\gamma(\frac{R_u{\mathbf u}^{3}-R_u{\mathbf u}^{2}}{\tau}-\frac{R_u{\mathbf u}^{2}-R_u{\mathbf u}^{1}}{\tau},\bm\theta_u^{3}-\bm\theta_u^{2})\\
&+\sum\limits_{n=3}^{N-1}\gamma(\frac{R_u{\mathbf u}^{n+1}-R_u{\mathbf u}^{n}}{\tau}-\frac{R_u{\mathbf u}^{n}-R_u{\mathbf u}^{n-1}}{\tau},\bm\theta_u^{n+1}-\bm\theta_u^{n})\\
=&\gamma(\frac{R_u{\mathbf u}^{2}-R_u{\mathbf u}^{1}}{\tau}-\frac{R_u{\mathbf u}^{1}-R_u{\mathbf u}^{0}}{\tau_0},\bm\theta_u^{2}-\bm\theta_u^{1})+\gamma(\frac{R_u{\mathbf u}^{3}-R_u{\mathbf u}^{2}}{\tau}-\frac{R_u{\mathbf u}^{2}-R_u{\mathbf u}^{1}}{\tau},\bm\theta_u^{3}-\bm\theta_u^{2})\\
&+\gamma(\frac{R_u{\mathbf u}^{N}-R_u{\mathbf u}^{N-1}}{\tau}-\frac{R_u{\mathbf u}^{N-1}-R_u{\mathbf u}^{N-2}}{\tau},\bm\theta_u^{N})-\gamma(\frac{R_u{\mathbf u}^{3}-R_u{\mathbf u}^{2}}{\tau}-\frac{R_u{\mathbf u}^{2}-R_u{\mathbf u}^{1}}{\tau},\bm\theta_u^{3})\\
&-\sum\limits_{n=3}^{N-1}\gamma(\frac{R_u{\mathbf u}^{n+1}-2R_u{\mathbf u}^{n}+R_u{\mathbf u}^{n-1}}{\tau}-\frac{R_u{\mathbf u}^{n}-2R_u{\mathbf u}^{n-1}+R_u{\mathbf u}^{n-2}}{\tau},\bm\theta_u^{n})\\
:=&J_1+J_2+J_3+J_4+J_5.
\end{align*}
It follows from the Young's inequality and Cauchy-Schwarz inequality that
\begin{align*}
J_1=&\gamma(\frac{R_u{\mathbf u}^{2}-R_u{\mathbf u}^{1}}{\tau}-\frac{R_u{\mathbf u}^{1}-R_u{\mathbf u}^{0}}{\tau_0},\bm\theta_u^{2}-\bm\theta_u^{1})\\
=&\gamma(\frac{1}{\tau}\int_{t_{1}}^{t_{2}}(t_2-t)R_u{\mathbf u}_{tt}dt+\frac{1}{\tau}\int_{t_{0}}^{t_{1}}(t-t_0)R_u{\mathbf u}_{tt}dt,\bm\theta_u^{2}-\bm\theta_u^{1})\\
\le&\gamma||\int_{t_{0}}^{t_{2}}R_u{\mathbf u}_{tt}dt||||\bm\theta_u^{2}-\bm\theta_u^{1}||\le\frac{\gamma}{4\tau}||\bm\theta_u^{2}-\bm\theta_u^{1}||^2+C\tau^2\int_{t_{0}}^{t_{2}}||R_u{\mathbf u}_{tt}||^2dt,\\
J_2=&\gamma(\frac{R_u{\mathbf u}^{3}-R_u{\mathbf u}^{2}}{\tau}-\frac{R_u{\mathbf u}^{2}-R_u{\mathbf u}^{1}}{\tau},\bm\theta_u^{3}-\bm\theta_u^{2})\le\frac{\gamma}{4\tau}||\bm\theta_u^{3}-\bm\theta_u^{2}||^2+C\tau^2\int_{t_{1}}^{t_{3}}||R_u{\mathbf u}_{tt}||^2dt,\\
J_3=&\gamma(\frac{R_u{\mathbf u}^{N}-R_u{\mathbf u}^{N-1}}{\tau}-\frac{R_u{\mathbf u}^{N-1}-R_u{\mathbf u}^{N-2}}{\tau},\bm\theta_u^{N})\\
=&\gamma(\frac{1}{\tau}\int_{t_{N-1}}^{t_{N}}(t_N-t)R_u{\mathbf u}_{tt}dt+\frac{1}{\tau}\int_{t_{N-2}}^{t_{N-1}}(t-t_{N-2})R_u{\mathbf u}_{tt}dt,\bm\theta_u^{N})\\
\le&\gamma||\int_{t_{N-2}}^{t_{N}}R_u{\mathbf u}_{tt}dt||||\bm\theta_u^{N}||\le C_k\tau(\max\limits_{0\le t\le t_f}||R_u{\mathbf u}_{tt}||)||\bm\epsilon(\bm\theta_u^{N})||\\
\le&\frac{\mu}{3}||\bm\epsilon(\bm\theta_u^{N})||^2+C\tau^2\max\limits_{0\le t\le t_f}||R_u{\mathbf u}_{tt}||^2,\\
J_4=&-\gamma(\frac{R_u{\mathbf u}^{3}-R_u{\mathbf u}^{2}}{\tau}-\frac{R_u{\mathbf u}^{2}-R_u{\mathbf u}^{1}}{\tau},\bm\theta_u^{3})\le\frac{\mu}{2}||\bm\epsilon(\bm\theta_u^{3})||^2+C\tau^2\max\limits_{0\le t\le t_f}||R_u{\mathbf u}_{tt}||^2.
\end{align*}
Based on the Taylor expansion, we present
\begin{align*}
\frac{R_u{\mathbf u}^{n+1}-2R_u{\mathbf u}^{n}+R_u{\mathbf u}^{n-1}}{\tau}
&=\tau R_u{\mathbf u}_{tt}^{n}+\frac{1}{2\tau}\int_{t_n}^{t_{n+1}}(t-t_{n+1})^2R_u{\mathbf u}_{ttt}dt-\frac{1}{2\tau}\int_{t_{n-1}}^{t_{n}}(t-t_{n-1})^2R_u{\mathbf u}_{ttt}dt,\\
\frac{R_u{\mathbf u}^{n}-2R_u{\mathbf u}^{n-1}+R_u{\mathbf u}^{n-2}}{\tau}
&=\tau R_u{\mathbf u}_{tt}^{n-1}+\frac{1}{2\tau}\int_{t_{n-1}}^{t_{n}}(t-t_{n})^2R_u{\mathbf u}_{ttt}dt-\frac{1}{2\tau}\int_{t_{n-2}}^{t_{n-1}}(t-t_{n-2})^2R_u{\mathbf u}_{ttt}dt.
\end{align*}
Therefore, we get
\begin{align*}
&||\frac{R_u{\mathbf u}^{n+1}-2R_u{\mathbf u}^{n}+R_u{\mathbf u}^{n-1}}{\tau}-\frac{R_u{\mathbf u}^{n}-2R_u{\mathbf u}^{n-1}+R_u{\mathbf u}^{n-2}}{\tau}||\\
=&||\tau\int_{t_{n-1}}^{t_n}R_u{\mathbf u}_{ttt}dt+\frac{1}{2\tau}\int_{t_n}^{t_{n+1}}(t-t_{n+1})^2R_u{\mathbf u}_{ttt}dt-\frac{1}{2\tau}\int_{t_{n-1}}^{t_{n}}(t-t_{n-1})^2R_u{\mathbf u}_{ttt}dt\\
&-\frac{1}{2\tau}\int_{t_{n-1}}^{t_{n}}(t-t_{n})^2R_u{\mathbf u}_{ttt}dt+\frac{1}{2\tau}\int_{t_{n-2}}^{t_{n-1}}(t-t_{n-2})^2R_u{\mathbf u}_{ttt}dt||\\
\le&2\tau\int_{t_{n-2}}^{t_{n+1}}||R_u{\mathbf u}_{ttt}||dt.
\end{align*}
It follows from the Young's inequality and Cauchy-Schwarz inequality that
\begin{align*}
J_5=&-\sum\limits_{n=3}^{N-1}\gamma(\frac{R_u{\mathbf u}^{n+1}-2R_u{\mathbf u}^{n}+R_u{\mathbf u}^{n-1}}{\tau}-\frac{R_u{\mathbf u}^{n}-2R_u{\mathbf u}^{n-1}+R_u{\mathbf u}^{n-2}}{\tau},\bm\theta_u^{n})\\
\le&\gamma C\tau\sum\limits_{n=3}^{N-1}||\bm\epsilon(\bm\theta_u^{n})||^2+C\tau^2\int_{t_1}^{t_N}||R_u{\mathbf u}_{ttt}||^2dt.
\end{align*}
Combining the estimates from $J_1$-$J_5$, we get
\begin{align*}
|I_{15}|\le&\frac{\mu}{3}||\bm\epsilon(\bm\theta_u^{N})||^2+\frac{\mu}{2}||\bm\epsilon(\bm\theta_u^{3})||^2+\frac{\gamma}{4\tau}||\bm\theta_u^{2}-\bm\theta_u^{1}||^2+\frac{\gamma}{4\tau}||\bm\theta_u^{3}-\bm\theta_u^{2}||^2\\
&+\gamma C\tau\sum\limits_{n=3}^{N-1}||\bm\epsilon(\bm\theta_u^{n})||^2+C\tau^2(\max\limits_{0\le t\le t_f}||R_u{\mathbf u}_{tt}||^2+\int_{t_1}^{t_N}||R_u{\mathbf u}_{ttt}||^2dt).
\end{align*}
For $I_{16}$, we have
\begin{align*}
I_{16}\le&\delta h^{2k_3+2}+\delta\tau\sum\limits_{n=1}^{N-1}||\theta_T^{n+1}||^2.
\end{align*}

Synthesizing $I_1$-$I_{16}$, we obtain
\begin{align}
&\mu||\bm\epsilon(\bm\theta_u^{N})||^2+\lambda||\nabla\cdot\bm\theta_u^{N}||^2+(c_0-2b_0)||\theta_p^{N}||^2+(c_0-2b_0)\tau^2\sum\limits_{n=1}^{N-1}||\partial_\tau\theta_p^{n+1}||^2+(a_0-b_0)||\theta_T^{N}||^2\notag\\
&+(a_0-2b_0)\tau^2\sum\limits_{n=1}^{N-1}||\partial_\tau\theta_T^{n+1}||^2
+ \frac{k_m}{2}\tau\sum\limits_{n=1}^{N-1}||\nabla\theta_p^{n+1} ||^2+\frac{\theta_m}{2}\tau\sum\limits_{n=1}^{N-1}||\nabla\theta_T^{n+1}||^2\notag\\
&+\gamma\tau||{\partial_\tau\bm\theta}_u^{N}||^2+\left(\gamma-(\frac{\alpha^2}{k_m}+\frac{\beta^2}{\theta_m})\right)\tau\sum\limits_{n=1}^{N-1}||{\partial_\tau\bm\theta}_u^{n+1}-{\partial_\tau\bm\theta}_u^{n}||^2\notag\\
\le& C\tau\sum\limits_{n=1}^{N-1}||\theta_p^{n+1} ||^2+C\tau \sum\limits_{n=1}^{N-1}||\theta_T^{n+1}||^2+C\tau\sum\limits_{n=1}^{N-1} ||\bm\theta_u^{n}||_1^2+ C(\tau^2+h^{2k_1+2}+h^{2k_2+2}+h^{2k_3+2})\notag\\
&+(c_0+2b_0)||\theta_p^{1}||^2+(a_0+b_0)||\theta_T^{1}||^2+ C||\bm\epsilon(\bm\theta_u^{1})||^2 +a({\bm\theta}_u^{1},{\bm\theta}_u^{1})+\gamma\tau||{\partial_\tau\bm\theta}_u^{1}||^2\notag\\
&+\frac{\mu}{2}||\bm\epsilon(\bm\theta_u^{3})||^2+\frac{\gamma}{4\tau}||\bm\theta_u^{2}-\bm\theta_u^{1}||^2+\frac{\gamma}{4\tau}||\bm\theta_u^{3}-\bm\theta_u^{2}||^2+\delta\tau\sum\limits_{n=1}^{N-1}||\theta_T^{n+1}||^2.\label{z1}
\end{align}
In \eqref{z1}, choosing $N=2$, we get
\begin{align*}
\frac{\gamma}{4\tau}||\bm\theta_u^{2}-\bm\theta_u^{1}||^2
\le&C(\tau^2+h^{2k_1+2}+h^{2k_2+2}+h^{2k_3+2})\notag\\
&+(c_0+b_0)||\theta_p^{1}||^2+(a_0+b_0)||\theta_T^{1}||^2+ a({\bm\theta}_u^{1},{\bm\theta}_u^{1})+\gamma\tau||{\partial_\tau\bm\theta}_u^{1}||^2.
\end{align*}
In \eqref{z1}, choosing $N=3$, it follows from the Gronwall inequality that
\begin{align*}
\frac{\mu}{4}||\bm\epsilon(\bm\theta_u^{3})||^2+\frac{\gamma}{4\tau}||\bm\theta_u^{3}-\bm\theta_u^{2}||^2
\le&C(\tau^2+h^{2k_1+2}+h^{2k_2+2}+h^{2k_3+2})+(c_0+b_0)||\theta_p^{1}||^2\notag\\
&+(a_0+b_0)||\theta_T^{1}||^2+ a({\bm\theta}_u^{1},{\bm\theta}_u^{1})+\gamma\tau||{\partial_\tau\bm\theta}_u^{1}||^2.
\end{align*}
By the Gronwall inequality, Lemma \ref{lem3.7} and $\bm\theta_u^0= \bm 0$, where $c_0-2b_0>0$, $a_0-2b_0\ge0$ and $\gamma=2(\frac{\alpha^2}{k_m}+\frac{\beta^2}{\theta_m})$, we obtain
\begin{align}
&\mu||\bm\epsilon(\bm\theta_u^{N})||^2+\lambda||\nabla\cdot\bm\theta_u^{N}||^2+||p(t_{n})-p_h^n||^2+||T(t_{n})-T_h^n||^2\notag\\
\le &C(\tau^2+h^{2k_1}+h^{2k_2+2}+h^{2k_3+2})+C||p_h^{1}-R_pp^1||^2+C||T_h^{1}-R_TT^1||^2\notag\\
&+2\mu||\bm\epsilon(\mathbf u_h^{1}-R_u\mathbf u^1)||_1^2+\lambda||\nabla\cdot(\mathbf u_h^{1}-R_u\mathbf u^1)||^2+\gamma\tau||\frac{\mathbf u_h^{1}-R_u\mathbf u^1}{\tau_0}||^2.
\end{align}

Given that the solution to \eqref{b7}-\eqref{b9} at time $t_1$ is $({\mathbf u}_h^1,p_h^1,T_h^1)\in {\mathbf V}_h\times Q_h\times S_h$. The following error estimates are then available \cite{MR4432106},
\begin{align*}
&||\mathbf u_h^{1}-R_u\mathbf u^1||_1^2+||p_h^{1}-R_pp^1||^2+||T_h^{1}-R_TT^1||^2+\tau||\frac{\mathbf u_h^{1}-R_u\mathbf u^1}{\tau}||_1^2\\
\leq &C(\tau^2+h^{2k_1}+h^{2k_2+2}+h^{2k_3+2}).
\end{align*}
Thus, the following error estimate is satisfied by the numerical solution of \eqref{s1}-\eqref{s3},
\begin{align*}
||\bm\epsilon(\mathbf u^n-\mathbf u_h^n)||+||\nabla\cdot(\mathbf u^n-\mathbf u_h^n)||+||p^{n}-p_h^n||+||T^{n}-T_h^n||\le C(\tau+h^{k_1}+h^{k_2+1}+h^{k_3+1}).
\end{align*}
The proof is completed.
\end{proof}

\section{Numerical simulations}\label{NE}
To verify our theoretical findings, we provide numerical tests in this section.  The computational infrastructure and technical assistance required for this work are generously provided by the State Key Laboratory of Mathematical Sciences (SKLMS), Chinese Academy of Sciences, high performance computers.  Two $18$-core $2.3$ GHz Intel Xeon Gold $6140$ CPUs and $192$ GB of RAM are installed on each computer node.
\subsection{Test for convergence order and computational efficiency}\label{eg1}
Consider the model \eqref{b1}-\eqref{b3} on the domain $\Omega=[0,1]^2$ and the last time $t_f=1$. We set $\lambda=1$, $\mu=1$, $\mathbf K=\begin{pmatrix} 1 & 0 \\ 0 & 1 \end{pmatrix}$, $\bm\Theta=\begin{pmatrix} 1 & 0 \\ 0 & 1 \end{pmatrix}$ and $\gamma=2(\alpha^2+\beta^2)$. Here, the values of $\alpha$, $\beta$ and $b_0$ are set as Table \ref{table1} with $a_0=c_0=3b_0$. These constitute five distinct parameter configurations, PA1-PA5, selected based on the reference \cite{2019Monolithic}. The five parameter choices are designed to vary the coupling strength between equations, thereby testing the algorithm across varying coupling intensities. Each right-hand side term is derived from the exact solution
$$
\displaystyle\mathbf u=\binom{\exp(-t)\sin(\pi x)\sin(\pi y)}{\exp(-t)\sin(\pi x)\sin(\pi y)},
\quad
p=t\sin(\pi x)\sin(\pi y),
\quad
T=\exp(-t)\sin(\pi x)\sin(\pi y).
$$
\begin{table}[ht!]
\caption{Parameter choices.}\label{table1}
  \begin{center}
  \begin{tabular}{cccccc}
   \hline
        &          PA1&       PA2&       PA3&          PA4&           PA5 \\ \hline
$\alpha$&          1.0&       0.1&       0.1&          1.0&           0.1 \\
$\beta$ &          1.0&       0.1&       1.0&          0.1&           0.1 \\
$b_0$   &          1.0&       1.0&       0.1&          0.1&           0.1 \\
\hline
  \end{tabular}
  \end{center}
\end{table}
The finite element discretization is built upon a uniform triangular mesh $\mathcal{T}_h$, with polynomial degree set to $k_1=k_2=k_3=1$.

In order to illustrate the convergence orders with respect to time, we fix the spatial mesh size $h=\frac {1} {128}$. Table \ref {table16} demonstrates the first-order temporal convergence in the $L^2$ norm for displacement, pressure and temperature under parameter PA1.

\begin{table}[htpb]
         \centering
	\caption{ Time convergence rates under parameter PA1.}
          \label{table16}
            \begin{center}
  \begin{tabular}{ccccccccc}
   \hline
  $\tau$&   $||{\mathbf u}(t_n)-{\mathbf u}_h^n||$&    R&   $||p(t_n)-p_h^n||$&         R&    $||T(t_n)-T_h^n||$&          R \\   \hline
$\frac{1}{4}$&     6.96538e-02&         -&    9.15642e-03&        -&     2.01854e-02&   -\\
$\frac{1}{8}$&     2.38386e-03&   1.5469 &    3.39852e-03&   1.4299&     1.07458e-02 &   0.9095\\
$\frac{1}{16}$&    1.21089e-03&   0.9772&    1.63428e-03&    1.0563&     5.53216e-03&    0.9579\\
$\frac{1}{32}$&    6.00244e-04&   1.0124&    8.14615e-04&    1.0045&     2.81619e-03&    0.9741\\
$\frac{1}{64}$&    3.10061e-04&   0.9530&    4.24473e-04&    0.9404&     1.43744e-03&    0.9702\\
   \hline
          \end{tabular}
            \end{center}
        \end{table}

Under the temporal discretization parameter $\tau=h^2$, the convergence rates of spatial errors are shown in Tables \ref{table11}-\ref{table15}. These tables demonstrate that our proposed approach achieves the optimal convergence orders within each of the five various coupling strengths, validating the theoretical predictions of Theorem \ref{thm1}.

\begin{table}[ht!]
         \centering
	\caption{Errors and convergence rates for parameter selection PA1.}
          \label{table11}
            \begin{center}
  \begin{tabular}{ccccccccc}
   \hline
  $h$&   $||{\mathbf u}(t_n)-{\mathbf u}_h^n||_1$&    R&   $||p(t_n)-p_h^n||$&         R&    $||T(t_n)-T_h^n||$&          R \\    \hline
$\frac{1}{4}$&             5.10646e-01&             -&         7.13192e-02&               -&               4.70078e-02&   -\\
$\frac{1}{8}$&             2.60347e-01&       0.9719 &         1.86375e-02&               1.9361&          1.34903e-02 &       1.8007\\
$\frac{1}{16}$&            1.30805e-01&             0.9930&    4.71218e-03&               1.9837&          3.49001e-03&                  1.9509\\
$\frac{1}{32}$&           6.54817e-02&             0.9983&    1.18140e-03&               1.9959&          8.79915e-04&                  1.9878\\
\hline
  \end{tabular}
  \end{center}
\end{table}

\begin{table}[ht!]
         \centering
	\caption{Errors and convergence rates for parameter selection PA2.}
          \label{table12}
            \begin{center}
  \begin{tabular}{ccccccccc}
   \hline
  $h$&   $||{\mathbf u}(t_n)-{\mathbf u}_h^n||_1$&    R&   $||p(t_n)-p_h^n||$&         R&    $||T(t_n)-T_h^n||$&          R \\    \hline
$\frac{1}{4}$&             5.10452e-01&             -&         7.36992e-02&               -&               4.64843e-02&   -\\
$\frac{1}{8}$&             2.60322e-01&       0.9715 &  1.94076e-02&               1.9250&          1.33349e-02 &       1.8015\\
$\frac{1}{16}$&            1.30802e-01&      0.9929&    4.91741e-03&               1.9807&          3.44800e-03&        1.9514\\
$\frac{1}{32}$&            6.54813e-02&       0.9982&   1.23353e-03&               1.9951&          8.69234e-04&         1.9879\\
\hline
  \end{tabular}
  \end{center}
\end{table}

\begin{table}[ht!]
\centering
	\caption{Errors and convergence rates for parameter selection PA3.}
          \label{table13}
            \begin{center}
  \begin{tabular}{ccccccccc}
   \hline
$h$&   $||{\mathbf u}(t_n)-{\mathbf u}_h^n||_1$&    R&   $||p(t_n)-p_h^n||$&         R&           $||T(t_n)-T_h^n||$&          R \\    \hline
$\frac{1}{4}$&            5.10619e-01&             -&   7.82814e-02&               -&               4.20207e-02&   -\\
$\frac{1}{8}$&            2.60349e-01&       0.9718 &    2.08770e-02&               1.9068&         1.19126e-02 &       1.8186\\
$\frac{1}{16}$&           1.30806e-01&      0.9930&    5.30921e-03&               1.9753&           3.06888e-03&        1.9567\\
$\frac{1}{32}$&           6.54817e-02&       0.9983&    1.33310e-03&               1.9937&          7.72890e-04&         1.9894\\
\hline
  \end{tabular}
  \end{center}
\end{table}

\begin{table}[ht!]
         \centering
	\caption{Errors and convergence rates for parameter selection PA4.}
          \label{table14}
            \begin{center}
  \begin{tabular}{ccccccccc}
   \hline
$h$&   $||{\mathbf u}(t_n)-{\mathbf u}_h^n||_1$&    R&   $||p(t_n)-p_h^n||$&         R&           $||T(t_n)-T_h^n||$&          R \\    \hline
$\frac{1}{4}$&             5.10853e-01&             -&   7.80993e-02&               -&               4.18617e-02&   -\\
$\frac{1}{8}$&             2.60391e-01&       0.9722 &   2.07978e-02&               1.9089&          1.18389e-02 &       1.8221\\
$\frac{1}{16}$&            1.30811e-01&      0.9932&     5.28653e-03&               1.9760&          3.04735e-03&        1.9579\\
$\frac{1}{32}$&            6.54825e-02&       0.9983&    1.32723e-03&               1.9939&          7.67295e-04&         1.9897\\
\hline
  \end{tabular}
  \end{center}
\end{table}

\begin{table}[ht!]
         \centering
	\caption{Errors and convergence rates for parameter selection PA5.}
          \label{table15}
            \begin{center}
  \begin{tabular}{ccccccccc}
   \hline
 $h$&   $||{\mathbf u}(t_n)-{\mathbf u}_h^n||_1$&    R&   $||p(t_n)-p_h^n||$&         R&           $||T(t_n)-T_h^n||$&          R \\    \hline
$\frac{1}{4}$&             5.10678e-01&             -&   7.82749e-02&               -&               4.19224e-02&   -\\
$\frac{1}{8}$&             2.60352e-01&       0.9720 &   2.08743e-02&               1.9068&          1.18692e-02 &       1.8205\\
$\frac{1}{16}$&            1.30806e-01&      0.9930&     5.30842e-03&               1.9754&          3.05629e-03&        1.9574\\
$\frac{1}{32}$&            6.54818e-02&       0.9983&    1.33289e-03&               1.9937&          7.69621e-04&         1.9896\\
\hline
  \end{tabular}
  \end{center}
\end{table}

To contrast with our approach, we additionally implement a fully implicit nonlinear numerical scheme for the governing system \eqref{b1}-\eqref{b3}, which combines standard finite element method (FEM) for spatial discretization with the backward Euler scheme for temporal integration. Figure \ref{fig3-1} displays the errors of the fully implicit FEM and the sequential finite element technique (SFEM), showing that both approaches have comparable error accuracy and convergence orders at the same grid scale. Furthermore, Table \ref{table17} reports the CPU runtime for the two approaches under parameter PA1, demonstrating that SFEM is computationally more efficient than the fully implicit FEM under the same grid size (and also with the similar level of precision).

\begin{figure}[ht!]
	\centering
	\includegraphics[width=3.5in]{40.jpg}
\caption{\small{Orders of error convergence for SFEM and fully implicit FEM.}}\label{fig3-1}
\end{figure}

\begin{table}[ht]
         \centering
	\caption{Computational efficiency comparison for PA1 parameter selection.}
          \label{table17}
            \begin{center}
  \begin{tabular}{ccccccccc}
   \hline
   \backslashbox{$h$}{CPU time (s)} & SFEM  & FEM  \\
\hline
$\frac{1}{8}$&  2.984 &3.228\\

$\frac{1}{16}$&            33.680      &43.574 \\

$\frac{1}{32}$&           431.919 &716.763 \\
 \hline
  \end{tabular}
  \end{center}
\end{table}
\subsection{Influence of the stabilization parameter $\gamma$}\label{eg2}
Using the same setup as Section \ref{eg1}, we now investigate the sensitivity of the proposed sequential scheme with respect to the stabilization parameter $\gamma$, which is introduced in \eqref{s3} to control the error propagation between the subproblems. According to Theorem \ref{thm0}, the theoretical choice is $\gamma \ge 2(\frac{\alpha^2}{k_m} + \frac{\beta^2}{\theta_m})$.

Firstly, we fix $\alpha=\beta=1$ and vary $\gamma$ from 0.001 to 0.1-spanning three orders of magnitude below the theoretical value. The results in Table \ref{table18} show that the errors and convergence rates are nearly identical across all three cases: the displacement converges at $O(h)$ in the $H^1$-norm, and the pressure and temperature converge at $O(h^2)$ in the $L^2$-norm.

 \begin{table}[ht!]
         \centering
	\caption{Errors and convergence rates for $\alpha=\beta=1$.}
          \label{table18}
            \begin{center}
  \begin{tabular}{ccccccccc}
   \hline
  $h$&   $||{\mathbf u}(t_n)-{\mathbf u}_h^n||_1$&    R&   $||p(t_n)-p_h^n||$&         R&    $||T(t_n)-T_h^n||$&          R \\
   \hline
   $\gamma=0.001$\\    \hline
$\frac{1}{4}$&             5.11341e-01&             -&         7.12231e-02&               -&               4.68136e-02&   -\\
$\frac{1}{8}$&             2.60472e-01&       0.9732 &         1.85926e-02&               1.9376&          1.32483e-02 &       1.8039\\
$\frac{1}{16}$&            1.30822e-01&             0.9935&    4.69920e-03&               1.9842&          3.46561e-03&                  1.9519\\
$\frac{1}{32}$&           6.54838e-02&             0.9984&    1.17804e-03&               1.9960&          8.73614e-04&                  1.9880\\
 \hline
   $\gamma=0.01$\\    \hline
$\frac{1}{4}$&             5.11340e-01&             -&         7.12231e-02&               -&               4.68136e-02&   -\\
$\frac{1}{8}$&             2.60471e-01&       0.9732 &         1.85926e-02&               1.9376&          1.34076e-02 &       1.8039\\
$\frac{1}{16}$&            1.30822e-01&             0.9935&    4.69920e-03&               1.9842&          3.46561e-03&                  1.9519\\
$\frac{1}{32}$&           6.54838e-02&             0.9984&    1.17804e-03&               1.9960&          8.73615e-04&                  1.9880\\
 \hline
   $\gamma=0.1$\\    \hline
$\frac{1}{4}$&             5.16087e-01&             -&         7.05526e-02&               -&               4.53814e-02&   -\\
$\frac{1}{8}$&             2.61373e-01&       0.9815 &         1.82497e-02&               1.9508&          1.26957e-02 &       1.8378\\
$\frac{1}{16}$&            1.30949e-01&             0.9971&    4.59619e-03&               1.9894&          3.25701e-03&                  1.9627\\
$\frac{1}{32}$&           6.55000e-02&             0.9994&    1.15163e-03&               1.9968&          8.20083e-04&                  1.9897\\
\hline
  \end{tabular}
  \end{center}
\end{table}

Secondly, we test stronger coupling regimes with $\alpha=\beta=3, 5, 6$. We choose $\gamma= 36$, $100$, $144$ to examine whether the method remains stable and accurate. Table \ref{table19} demonstrates that the optimal convergence rates are preserved in all cases. As $\alpha$ and $\beta$ increase, the displacement error grows slightly, which is expected due to stronger coupling, but the convergence rates remain optimal. These results indicate that the theoretical bound on the stabilization parameter $\gamma$ is conservative. The method performs robustly for a broad range of
$\gamma \ge 2(\frac{\alpha^2}{k_m} + \frac{\beta^2}{\theta_m})$, including those significantly below the theoretical recommendation. In the practical computation, we adopt $\gamma= 2(\frac{\alpha^2}{k_m} + \frac{\beta^2}{\theta_m})$ as a safe default, while noting that the method tolerates substantial deviations without losing accuracy or stability.
\begin{table}[ht!]
         \centering
	\caption{Errors and convergence rates for $\alpha=\beta=3, 5, 6$.}
          \label{table19}
            \begin{center}
  \begin{tabular}{ccccccccc}
   \hline
  $h$&   $||{\mathbf u}(t_n)-{\mathbf u}_h^n||_1$&    R&   $||p(t_n)-p_h^n||$&         R&    $||T(t_n)-T_h^n||$&          R \\    \hline
   $\gamma=36,\alpha=3,\beta=3$\\    \hline
$\frac{1}{4}$&             5.28209e-01&             -&         7.05736e-02&               -&               4.55127e-02&   -\\
$\frac{1}{8}$&             2.62382e-01&       1.0094 &         1.82320e-02&               1.9527&          1.26363e-02 &       1.8487\\
$\frac{1}{16}$&            1.31006e-01&             1.0020&    4.60291e-03&               1.9859&          3.26883e-03&                  1.9507\\
$\frac{1}{32}$&           6.55085e-02&             0.9999&    1.15338e-03&               1.9967&          8.22991e-04&                  1.9898\\
\hline
 $\gamma=100,\alpha=5,\beta=5$\\    \hline
$\frac{1}{4}$&             7.4402e-01&             -&         6.99593e-02&               -&               4.26958e-02&   -\\
$\frac{1}{8}$&             2.68384e-01&       1.4523 &         1.97586e-02&               1.8240&          1.47933e-02 &       1.5292\\
$\frac{1}{16}$&            1.31929e-01&            1.0245&    4.61008e-03&               2.0996&          3.17824e-03&                2.2186\\
$\frac{1}{32}$&           6.56640e-02&             1.0066&    1.09453e-03&               2.0745&          7.05422e-04&                  2.1717\\
\hline
  $\gamma=144,\alpha=6,\beta=6$\\    \hline
$\frac{1}{4}$&             8.38853e-01&             -&         7.00915e-02&               -&               4.24586e-02&   -\\
$\frac{1}{8}$&             2.86346e-01&       1.5507 &         1.99234e-02&               1.8148&          1.44400e-02 &       1.5560\\
$\frac{1}{16}$&            1.33983e-01&             1.0957&    4.23789e-03&               2.2330&          2.56376e-03&                  2.4937\\
$\frac{1}{32}$&           6.58272e-02&             1.0253&    1.08148e-03&               1.9703&          6.77403e-04&                  1.9202\\
\hline
  \end{tabular}
  \end{center}
\end{table}
\subsection{Robustness with respect to model parameters}\label{eg3}
In this subsection, to further validate the parameter robustness of the proposed sequential scheme, we first test the algorithms under varying settings of hydraulic conductivity $\mathbf K$ and effective thermal conductivity $\mathbf\Theta$. We consider three cases: extremely low permeability ($\mathbf K = 10^{-6}\mathbf I$), extremely low thermal conductivity ($\mathbf\Theta = 10^{-6}\mathbf I$), and the simultaneous extreme case ($\mathbf K = \mathbf\Theta = 10^{-6}\mathbf I$), with all other parameters fixed as PA1. The results are reported in Tables \ref{table20}-\ref{table22}. In all the three cases, the method achieves optimal convergence orders: $O(h)$ for the displacement in the $H^1$-norm and $O(h^2)$ for both pressure and temperature in the $L^2$-norm, identical to the baseline case (Table \ref{table11}). This confirms that the asymptotic convergence behavior is independent of the magnitudes of $\mathbf K$ and $\mathbf\Theta$. These findings align with the parameter-robustness observations in the preconditioning study \cite{MR5009266} and the iterative L-scheme results in \cite{2019Monolithic}, confirming that the proposed sequential method is robust across wide variations in all key model parameters, including $\mathbf K$, $\mathbf\Theta$, and the coupling coefficients.
\begin{table}[ht!]
         \centering
	\caption{Errors and convergence rates for $\mathbf K=10^{-6}\mathbf I$.}
          \label{table20}
            \begin{center}
  \begin{tabular}{ccccccccc}
   \hline
  $h$&   $||{\mathbf u}(t_n)-{\mathbf u}_h^n||_1$&    R&   $||p(t_n)-p_h^n||$&         R&    $||T(t_n)-T_h^n||$&          R \\    \hline
$\frac{1}{4}$&             5.13445e-01&             -&         6.55717e-02&               -&               2.67761e-02&   -\\
$\frac{1}{8}$&             2.60845e-01&       0.9770 &         1.82852e-02&               1.8424&          8.07294e-02 &       1.7298\\
$\frac{1}{16}$&            1.30852e-01&             0.9953&    4.60429e-03&               1.9896&          2.10866e-02&                  1.9368\\
$\frac{1}{32}$&           6.54852e-02&             0.9987&    1.15754e-03&               1.9919&          5.44519e-03&                  1.9533\\
\hline
  \end{tabular}
  \end{center}
\end{table}

\begin{table}[ht!]
         \centering
	\caption{Errors and convergence rates for $\mathbf\Theta=10^{-6}\mathbf I$.}
          \label{table21}
            \begin{center}
  \begin{tabular}{ccccccccc}
   \hline
  $h$&   $||{\mathbf u}(t_n)-{\mathbf u}_h^n||_1$&    R&   $||p(t_n)-p_h^n||$&         R&    $||T(t_n)-T_h^n||$&          R \\    \hline
$\frac{1}{4}$&             5.10451e-01&             -&         4.13480e-02&               -&               3.16718e-02&   -\\
$\frac{1}{8}$&             2.60308e-01&       0.9716 &         9.63858e-03&               2.1009&          8.62271e-03 &       1.8770\\
$\frac{1}{16}$&            1.30800e-01&             0.9929&    2.34308e-03&               2.0404&          2.20581e-03&                  1.9668\\
$\frac{1}{32}$&           6.54809e-02&             0.9982&    5.79084e-04&               2.0166&          5.54693e-04&                  1.9915\\
\hline
  \end{tabular}
  \end{center}
\end{table}

\begin{table}[ht!]
         \centering
	\caption{Errors and convergence rates for $\mathbf K=10^{-6}\mathbf I$ and $\mathbf\Theta=10^{-6}\mathbf I$.}
          \label{table22}
            \begin{center}
  \begin{tabular}{ccccccccc}
   \hline
  $h$&   $||{\mathbf u}(t_n)-{\mathbf u}_h^n||_1$&    R&   $||p(t_n)-p_h^n||$&         R&    $||T(t_n)-T_h^n||$&          R \\    \hline
$\frac{1}{4}$&             5.11189e-01&             -&         9.64956e-02&               -&               6.15402e-02&   -\\
$\frac{1}{8}$&             2.60458e-01&       0.9728 &         2.44771e-02&               1.9790&          1.68016e-02 &       1.8729\\
$\frac{1}{16}$&            1.30821e-01&             0.9935&    6.15793e-03&               1.9909&          4.32621e-03&                  1.9574\\
$\frac{1}{32}$&           6.54838e-02&             0.9984&    1.54811e-03&               1.9919&          1.09495e-03&                  1.9822\\
\hline
  \end{tabular}
  \end{center}
\end{table}

Next, we investigate the influence of the effective thermal parameter $a_0$, the thermal dilation coefficient $b_0$, and the specific storage coefficient $c_0$ on the accuracy of the algorithms. Notably, the method remains stable and accurate even when $a_0 = b_0 = c_0 = 0$ (Table \ref{table23}). This behavior aligns with the observations in \cite{cai} for iterative splitting schemes, and with the parameter-robust preconditioning results in \cite{MR5009266}, which demonstrate the uniform stability of the four-field formulation across wide parameter variations.
\begin{table}[ht!]
         \centering
	\caption{Errors and convergence rates for $a_0=b_0=c_0=0$.}
          \label{table23}
            \begin{center}
  \begin{tabular}{ccccccccc}
   \hline
  $h$&   $||{\mathbf u}(t_n)-{\mathbf u}_h^n||_1$&    R&   $||p(t_n)-p_h^n||$&         R&    $||T(t_n)-T_h^n||$&          R \\    \hline
$\frac{1}{4}$&             5.11213e-01&             -&         7.89788e-02&               -&               4.11325e-02&   -\\
$\frac{1}{8}$&             2.60458e-01&       0.9729 &         2.10853e-02&               1.9052&          1.15835e-02 &       1.8282\\
$\frac{1}{16}$&            1.30820e-01&             0.9935&    5.36372e-03&               1.9749&          2.97758e-03&                  1.9599\\
$\frac{1}{32}$&           6.54836e-02&             0.9984&    1.34688e-03&               1.9936&          7.49455e-04&                  1.9902\\
\hline
  \end{tabular}
  \end{center}
\end{table}

\section*{Acknowledgments}
The authors gratefully acknowledge partial financial support from the National Natural Science Foundation of China (No. 12301465), the Natural Science Foundation of Shandong Province (No. ZR2022MA081, ZR2024MA056, ZR2025LZH001, ZR2025MS1036) and the Research Foundation for Beijing University of Technology New Faculty (No. 006000514122516).

\section*{Declarations}

\subsection*{Conflict of interest}
The authors declare that there are no competing interests associated with this work.

\subsection*{Data availability}
The data produced and examined in this study can be obtained from the corresponding author upon reasonable request.


\end{document}